\documentclass[12pt]{article}
\usepackage{geometry}                
\geometry{letterpaper}                   

\oddsidemargin 0in \topmargin 0in \headheight 0in \headsep
0in \textheight 9in \textwidth 6.5in

\usepackage[longnamesfirst]{natbib}
\bibpunct[ ]{(}{)}{,}{a}{}{,}

\usepackage{graphicx}
\usepackage{amssymb}
\usepackage{amsmath}
\usepackage{amsthm}
\usepackage{amsfonts}
\DeclareGraphicsRule{.tif}{png}{.png}{`convert #1 `dirname #1`/`basename #1 .tif`.png}
\usepackage{amsmath,amsfonts, amssymb, amsthm, amscd,graphicx}
\usepackage{latexsym}
\newcommand{\R}{\ensuremath{\mathbb{R}}}

 \newcommand{\N}{\ensuremath{\mathbb{N}}}

\newcommand\sa{\sqrt{a}}

\newcommand\EE{{\mathbb E}}
\newcommand\PP{{\mathbb P}}
 \newcommand{\eps}{\varepsilon}
\theoremstyle{plain}
 \newtheorem{theorem}{Theorem}
 \newtheorem{lemma}[theorem]{Lemma}
 \newtheorem{propo}[theorem]{Proposition}
 \newtheorem{corollary}[theorem]{Corollary}

\theoremstyle{definition} 
 \newtheorem{rem}[theorem]{Remark}

\title{The right tail exponent of the Tracy-Widom-$\beta$ distribution}
\author{Laure Dumaz \and B\'alint Vir\'ag}

\begin{document}

\maketitle

\begin{abstract}
The Tracy-Widom $\beta$ distribution is the large
dimensional limit of the top eigenvalue of $\beta$ random
matrix ensembles.  We use the stochastic Airy operator
representation to show that as $a\to\infty$ the tail of the
Tracy Widom distribution satisfies
$$P \left(TW_{\beta} > a \right) = a^{-\frac34
\beta+o(1)}\exp\left(-\frac {2} {3} \beta a^{3/2} \right).
$$
\end{abstract}

\begin{center}
\includegraphics[width = 15cm]{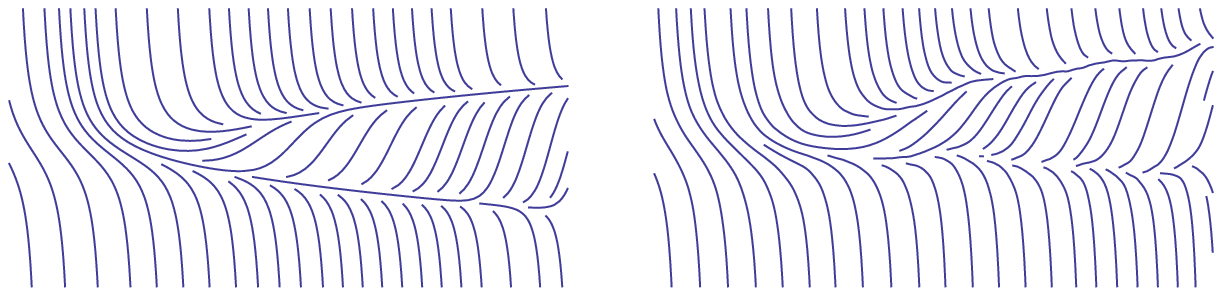}
\\ \it Flowlines for the ODE of
$X-\frac2{\sqrt{\beta}}B$
\end{center}
\addtocounter{figure}{1}

\section{Introduction}
For $\beta > 0$ fixed, we examine the probability density of
$\lambda_1 \geq \lambda_2 \geq \ldots \geq \lambda_n \in
\R$ given by :
\begin{align} \label{loivp}
\mathbb{P}_{\beta}(\lambda_1,\lambda_2,\ldots,\lambda_n) =
\frac{1}{Z_{n,\beta}} e^{- \beta \sum_{k=1}^n
\lambda_k^2/4} \prod_{j<k} |\lambda_j-\lambda_k|^{\beta},
\end{align}
in which $Z_{n,\beta}$ is a normalizing constant. This
family of distribution is called the $\beta$-ensemble. When
$\beta = 1,2 \mbox{ or } 4$, this is the joint density of
eigenvalues for respectively the Gaussian orthogonal,
unitary, or symplectic ensembles of random matrix theory.
But the law $(\ref{loivp})$ has a physical sense for the
$\beta$ as it describes a one-dimensional Coulomb gas at
inverse temperature $\beta$. \cite{DE} discovered a that
\eqref{loivp} is the eigenvalue distribution for the
tridiagonal matrix
\begin{align*}
H_n^{\beta} = \frac{1}{\sqrt{\beta}} \left [
\begin{array}{ccccc}
  g_1 & \chi_{(n-1)\beta} & \quad & \quad &  \\
  \chi_{(n-1)\beta} & g_2 & \chi_{(n-2)\beta} & \quad &  \\
 \quad  & \ddots & \ddots & \ddots &  \\
 \quad  & \quad & \chi_{2\beta} & g_{n-1} & \chi_{\beta} \\
 \quad  & \quad & \quad & \chi_{\beta} & g_n
\end{array}
\right ],
\end{align*}
where the random variables $g_1, g_2, \cdots, g_n$ are
independent Gaussians with mean $0$ and variance $2$ and
$\chi_{\beta},\chi_{2\beta},\cdots,\chi_{(n-1)\beta}$ are
independents $\chi$ random variables indexed by the shape
parameter.

When $n \uparrow \infty$, the largest eigenvalue centered
by $2\sqrt{n}$ and scaled by $n^{1/6}$ converges in law to
the Tracy-Widom($\beta$) distribution. This was first shown
in  \cite{TW} and \cite{TW2} for the cases $\beta=1,2$ or
$4$, where exact formulae are available. \cite{balint}
extended this result for all the $\beta$. They show that
the rescaled rescaled operator:
\begin{align*}
\tilde{H}_n^{\beta} := n^{1/6}(2\sqrt{n} I - H_n^{\beta})
\end{align*}
converges to the stochastic Airy operator ($SAE_{\beta}$) :
\begin{align}\label{SAE}
\mathcal{H}_{\beta} = - \frac{d^2}{dx^2} + x +
\frac{2}{\sqrt{\beta}}\, B'_x
\end{align}
in the appropriate sense (here $B'$ is a white noise). In
particular, the low-lying eigenvalues of
$\tilde{H}_n^{\beta}$ converge in law to those of
$\mathcal{H}_{\beta}$. Thanks to the Ricatti transform, the
eigenvalues of $SAE_{\beta}$ can be reinterpreted in terms
of the explosion probabilities of a one-dimensional
diffusion. In particular, \cite{balint} show that
\begin{eqnarray}\label{th1}
\PP(TW_{\beta} > a) &=& \PP_{+\infty}(X \mbox{ blows up  in
a finite time}),
\end{eqnarray}
where $X$ is the diffusion
\begin{equation}
\label{SDE1} \left\{
\begin{array}{l}
dX(t) = (t + a - X^2(t)) dt + \frac{2} {\sqrt{\beta}}
dB(t),
\\ X(0)=\infty.
\end{array} \right.
\end{equation}

Note also that  $X-\frac{2} {\sqrt{\beta}} B$ satisfies an
ODE, simulated on the front page with $\beta=\infty$ and
$2$. The starting time of the separatrix is distributed as
$-$TW$_\beta$.

Asymptotic expansions of beta-ensembles are of active interest in the literature, see for example \cite{CEM}, \cite{Balint2}, 
\cite{Dyson} and \cite{ChenManning}.

In this article, we study the diffusion $\eqref{SDE1}$ in
order to obtain the right tail of the Tracy-Widom law. Our
main tool will be the Cameron-Martin-Girsanov theorem: it
permits us to change the drift coefficient of the diffusion
and evaluate the probability of explosion using
the new process.

Using the variational characterization of the eigenvalues
of SAE$_{\beta}$ $(\ref{SAE})$ and an analysis of the SDE
\eqref{SDE1} \cite{balint} show that as  $a\to\infty$ we
have
\begin{align}
\begin{array}{lll}
P\left(TW_{\beta} < - a\right) &=& \exp\left(-\frac{1}{24} \beta a^3 (1+o(1))\right),\quad \mbox{and} \\
P\left(TW_{\beta} > a\right) &=& \exp\left(-\frac{2}{3}
\beta a^{3/2} (1+o(1))\right).
\end{array}
\end{align}
While we were finishing this article, \cite{Celine}, in a
physics paper, using completely different methods,
calculated more precise asymptotics for the {\it left tail}
of the Tracy-Widom distribution.

In this paper we evaluate the exponent of the polynomial
factor in the asymptotics of the right tail.

\begin{theorem}\label{main}
When $a \to + \infty$, we have
\begin{align}\label{Statement}
P \left(TW_{\beta} > a \right) = a^{-3\beta/4
}\exp\left(-\frac {2} {3} \beta a^{3/2}+ O\big(\sqrt{\ln
a}\big)\right).
\end{align}
\end{theorem}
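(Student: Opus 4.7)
The plan is to apply the Cameron--Martin--Girsanov theorem to tilt the drift of the diffusion \eqref{SDE1} along a near-optimal explosion trajectory, and then read off the tail from the Radon--Nikodym derivative. The noise-free equation $\dot x = t+a-x^2$ has a stable branch at $+\sqrt{t+a}$ and an unstable branch at $-\sqrt{t+a}$; starting from $+\infty$ the deterministic solution is attracted to the stable branch, so for the diffusion to blow up it must cross the separatrix to the unstable side. The two branches are closest at $t=0$ (gap $2\sqrt a$), so the cheapest crossing occurs there, and this is where the tilt should localize the transition.

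Fix a reference path $\phi$ that shadows $+\sqrt{t+a}$ on an initial interval, follows the reversed flow $\dot\phi=-(t+a-\phi^2)$ through a short transition near $t=0$, and then tracks $-\sqrt{t+a}$ down to $-\infty$. Set $h(t)= \dot\phi(t) - (t+a-\phi(t)^2)$. Outside the transition window $h$ vanishes; on the transition segment $h=-2(t+a-\phi^2)$, and the change of variables $d\phi = -(t+a-\phi^2)\,dt$ yields
\[
\frac{\beta}{8}\int h^2\,dt \;=\; \frac{\beta}{2}\int_{-\sqrt a}^{\sqrt a}(a-\phi^2)\,d\phi \;+\;o(a^{3/2}) \;=\; \tfrac{2}{3}\beta a^{3/2} + o(a^{3/2}).
\]
This already recovers the leading exponent in \eqref{Statement}.

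To obtain the prefactor I would write
\[
P(\mathrm{expl.}) \;=\; e^{-\frac{2}{3}\beta a^{3/2}}\,\EE_Q\!\left[e^{-\int (h/\sigma)\,dW}\mathbf{1}_{\mathrm{expl.}}\right],
\]
where $W$ is the $Q$-Brownian motion and $\sigma = 2/\sqrt\beta$. Under $Q$ the diffusion is concentrated in a tube around $\phi$, and the event of explosion has probability of order one, reducing the question to a Laplace-type integral. The polynomial prefactor $a^{-3\beta/4}$ should emerge from the Gaussian weight for fluctuations of $X-\phi$: on the branches these behave like an inhomogeneous Ornstein--Uhlenbeck process with restoring rate $2\sqrt{t+a}$ and local variance $(\beta\sqrt{t+a})^{-1}$, while on the transition segment the fluctuation is set by the Brownian variance times the traversal time. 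Assembling these Gaussian weights against the constraint of a genuine separatrix crossing, with the $\beta$-dependent factors carefully tracked, should yield the exponent $-3\beta/4$.

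The main obstacle is the sharpness of the polynomial. The leading exponential is already essentially in \cite{balint}; upgrading to $a^{-3\beta/4}$ with only $O(\sqrt{\ln a})$ error in the exponent demands matched upper and lower bounds. For the upper bound one must rule out competing mechanisms (crossings at later times $t\asymp 1$, or along asymmetric paths) and control the martingale $\int (h/\sigma)\, dW$ uniformly. For the lower bound one needs an explicit near-optimal tilt under which the tilted diffusion explodes with polynomial probability. The $\sqrt{\ln a}$ correction in the exponent reflects the scale of tail events of the stochastic part of the Girsanov density on the event of explosion, and this is where the argument is most delicate.
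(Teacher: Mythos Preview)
Your Girsanov framework and the leading-order computation are correct, but that part was already in \cite{balint}; the new content of Theorem~\ref{main} is the polynomial exponent, and here the proposal has a real gap. You say the factor $a^{-3\beta/4}$ ``should emerge from the Gaussian weight for fluctuations of $X-\phi$'' via an inhomogeneous OU analysis, but you give no calculation, and this is not the mechanism that actually produces the exponent. In the paper the $-\tfrac{3}{4}\beta$ splits as $-\tfrac{3}{8}\beta-\tfrac{3}{8}\beta$, with the two halves coming from quite concrete and distinct sources that your heuristic does not touch.

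First, the paper does not tilt by a deterministic path $\phi$. It tilts $X$ to another diffusion $Y$ with the \emph{reversed} drift $-(t+a-Y^2)+\varphi(Y)$. The point is that after It\^o's formula the Radon--Nikodym exponent becomes an explicit path functional, equation~\eqref{eqstar2}, containing an It\^o-correction term $(\tfrac{8}{\beta}-2)\int_0^{T'_-}Y_t\,dt$ and a time-cost term $-2\sqrt{a}\,T'_-$. Choosing $\varphi_2(x)=\big((\tfrac{8}{\beta}-2)x-2\sqrt{a}\big)/(a-x^2)$ cancels these exactly; what remains is the antiderivative $\phi_2(Y_0)-\phi_2(Y_{T'_-})$, which equals $-\tfrac{3}{2}\ln a$ and gives one $-\tfrac{3}{8}\beta\ln a$. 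Your deterministic-shift density $e^{-\int(h/\sigma)\,dW}$ leaves a genuine stochastic integral whose expectation on the explosion event you would still have to evaluate to $O(\sqrt{\ln a})$ precision; nothing in the proposal indicates how.

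Second, the other $-\tfrac{3}{8}\beta\ln a$ comes from the region \emph{above} the parabola, not from fluctuations inside it. The diffusion started at $+\infty$ needs a time $\tau\sim\tfrac{3}{8}\ln a/\sqrt{a}$ to reach $\sqrt{a}$ (Proposition~\ref{resuabove}, proved by ODE comparison with $\coth$), and because the drift is $t+a-X^2$ this delay shifts the effective parameter to $\tilde a=a+\tau$, so that $-\tfrac{2}{3}\beta\tilde a^{3/2}=-\tfrac{2}{3}\beta a^{3/2}-\tfrac{3}{8}\beta\ln a+o(\ln a)$. Your reference path ``shadows $+\sqrt{t+a}$ on an initial interval'' but you never quantify that interval, and you locate the entire prefactor inside the transition window; this misses the mechanism entirely. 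Getting matched upper and lower bounds at the $\sqrt{\ln a}$ scale also requires the path-control lemmas (Lemma~\ref{lem:pathcontrol}, the last-passage decomposition, and the preliminary bound~\eqref{e:timecontrol}) that your outline does not anticipate.
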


This generalizes, in a less precise form, a result that
follows from Painlev\'e asymptotics for the case $\beta =
2$ (see the slide $3$ of the presentation of \cite{Baik}).
\begin{align*}
P(TW_{2} > a) = \frac{a^{-3/2}}{16 \pi}\exp\left(-\frac{4}{3} a^{3/2} +
O\left({a^{-3/2}}\right)\right).
\end{align*}

The structure of the proof of Theorem \ref{main} is
contained in Section \ref{s:outline}.

\paragraph{Preliminaries and notation.}
For every initial condition in $[-\infty, +
\infty]$, the SDE $(\ref{SDE1})$ admits a unique solution,
and this solution is increasing in $a$ for each time $t$
(see Fact 3.1 in \cite{balint}). From now on, we denote by
$(\Omega, \mathcal{F}, \PP_{(t,x)})$ the probability space
on which the solution of this SDE $X$ begins at time $t$
with the value $X_t = x$ almost surely, and $\EE_{(t,x)}$
its corresponding expectation ($x \in [-\infty,+\infty]$).
When the starting time is $t=0$, we simply write $\PP_{x}$
and $\EE_{x}$.

\medskip

\noindent The  first passage time to a
level $x \in [-\infty, \infty]$ for the diffusion $X$ will be denoted $T_x
:= \inf\{s \geq 0, \; X_s = x\}$.

\medskip

Throughout this paper, we study many solutions of
stochastic differential equations by comparing them to
expressions involving Brownian motion. The letter $B$
will denote a standard Brownian motion on the probability
space $(\Omega, \mathcal{F}, P)$. We will use the following
easy estimates. For the upper bounds, these inequalities
hold for every $x \geq 0$:
\begin{align}\label{TailBMup}
\left\{
\begin{array}{ll}
P(B_1 > x) &\leq e^{-\frac{1}{2} x^2} \\
P\left(\sup_{t \in [0,1]} |B_t| > x\right) &\leq 4
e^{-\frac{1}{2} x^2}.
\end{array}\right.
\end{align}
\noindent For the lower bounds, there exists $c_{bm} > 0$
such that for every  $\eps \in (0,1)$:
\begin{align}\label{TailBMlower}
P\left(\sup_{t \in [0,1]} |B_t| < \eps\right) \geq
\exp\left(-c_{bm}\: \frac{1}{\eps^2}\right).
\end{align}
In the sequel, asymptotic notation always refers to
$a\to\infty$ unless stated otherwise. Inequalities are
meant to hold for all large enough $a$.

\section{Proof of the Theorem \ref{main}}\label{s:outline}
\begin{figure}[!h]
\centering
\includegraphics[width = 10cm]{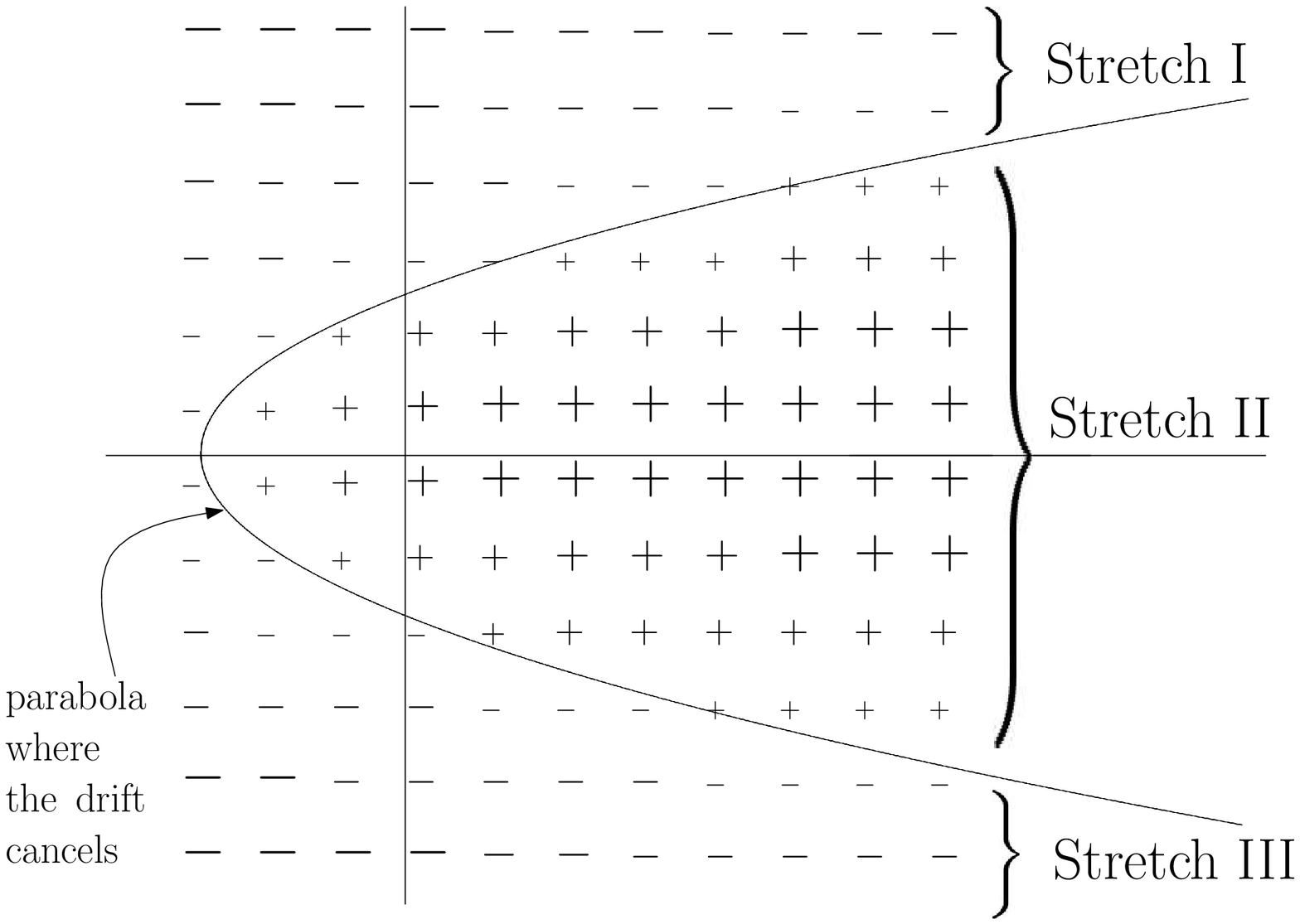}
\caption{\label{picparabola}The critical parabola $\{(t,x) : t+ a -x^2 = 0\}$}
\end{figure}

This section gives the structure of proof of the main
theorem. The proof of technical points will be treated in
the following sections in chronological order.

We rely on the characterization \eqref{th1}, and separate
our study of the diffusion \eqref{SDE1} into three
distinguished parts demarcated by the {\bf critical
parabola}
$$\{(t,x) : t+ a -x^2 = 0\}$$ where the drift vanishes (see
Figure \ref{picparabola}). The exponential leading term of
the asymptotic (\ref{Statement}) comes from the part inside
the parabola (Stretch II): the drift is positive and makes
it difficult for the particle to go down. One part of the
logarithmic term comes from the time it takes to reach the
upper part of the parabola (Stretch I): the $t$-term of the
drift adds this cost.

\subsection{Upper bound, above the parabola}

At first, let us approximate the critical parabola by the
two horizontal lines $\sqrt{a}$ and $-\sqrt{a}$ (as the
blow-down times will be typically very small). Moreover,
the part below the parabola gives no contribution for the
upper bound, and we use
\begin{figure}[!b]
\centering
\includegraphics[width = 10cm]{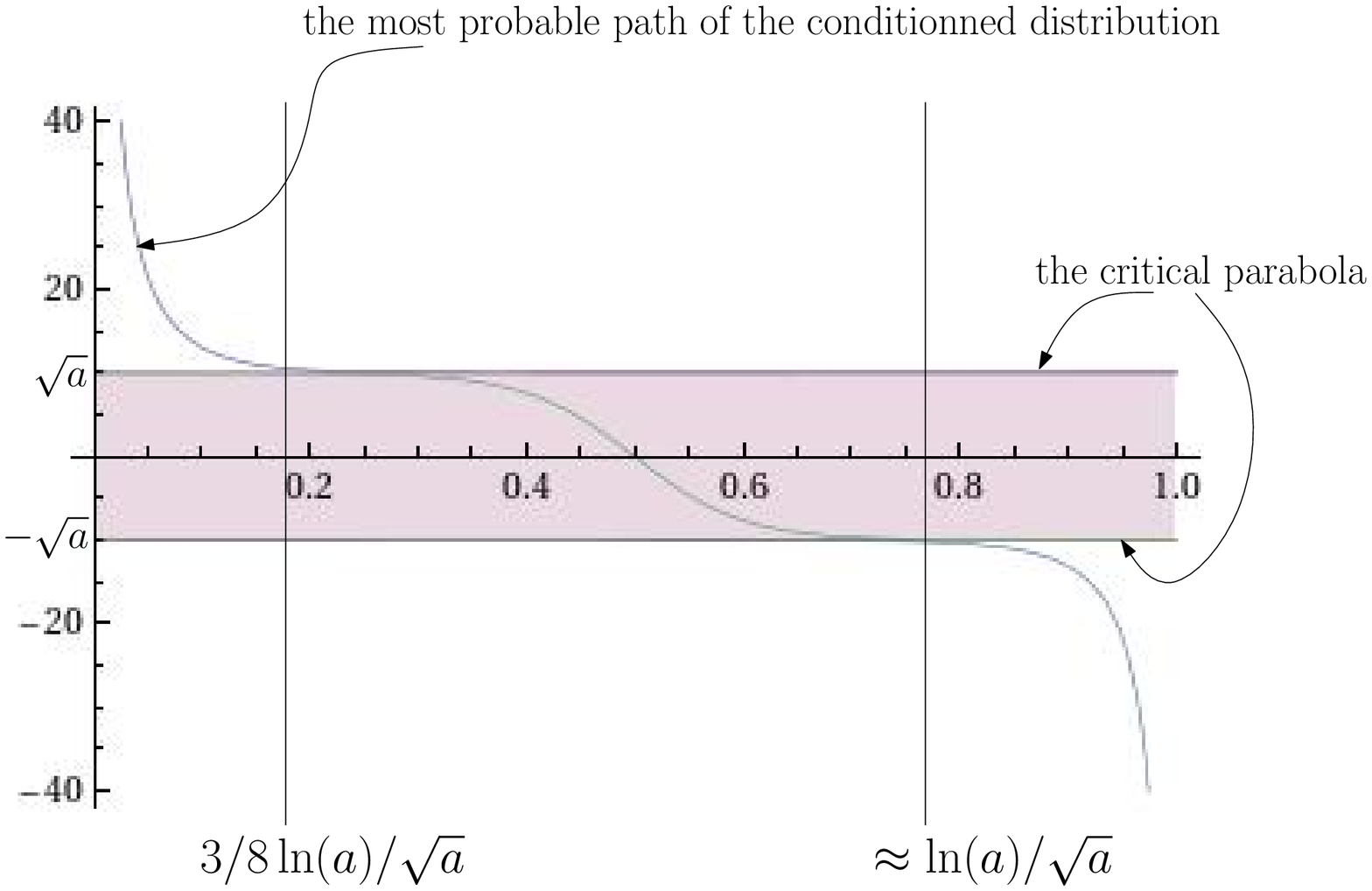}
\caption{\label{picmostprobablepath}The most probable path of the conditioned
diffusion for $a=100$}
\end{figure}
$$\PP_{\infty}(T_{-\infty} < +\infty) \leq \PP_{\infty}(T_{-\sqrt{a}} < +\infty).$$
The first step is to control the time it takes to reach
$\sqrt{a}$. Indeed, as the cost for crossing the interval
$[-\sqrt{a}, \sqrt{a}]$ increases with time, we need to
find a good lower bound for this time. A comparison with
the solution of an ODE linked to our SDE enables us
to have a quite precise information: its typical value is
$3/8 \ln a/\sqrt{a}$, which does not depend on the factor
$\beta$.  It is very unlikely to happen in time faster than
$$\tau_-=(3/8 - 1/\sqrt{\ln a}) \ln a/\sqrt{a}.$$ This is the
content of Proposition \ref{resuabove}. Therefore, using
this proposition, the decreasing property in $t$ and the
Markov property, we can write:
\begin{align}
\PP_{\infty}\left(T_{\sqrt{a}} <\tau_-,\;  T_{-\sqrt{a}} < \infty \right)
\leq \exp\left(-\frac{4}{3} \beta e^{2 \sqrt{\ln
a}}\right)\, \PP_{\sqrt{a}}(T_{-\sqrt{a}} < \infty).
\label{startimm}
\end{align}

The asymptotic formula \eqref{formulelessprecise} given by
Lemma \ref{lessprecise} will highlight the fact that even
if the process is considered to start immediately at
$\sqrt{a}$ in line \eqref{startimm}, the award is small (of
the order $\exp(O(\ln a)$) compared to the cost it takes to
go down quickly. Consequently, with a much more significant
probability, it will take a longer time than the one
considered in (\ref{startimm}) to reach $\sqrt{a}$. Let us
find an upper bound for this case:
\begin{align*}
\PP_{\infty}\left(T_{-\sqrt{a}} < \infty,\; T_{\sqrt{a}}
\geq \tau_-\right) \leq \PP_{\tau_-,
\sqrt{a}}(T_{-\sqrt{a}} < \infty)
\end{align*}
Thanks to the Markov property, the process $X$ under the
probability measure $\PP_ {\tau_-, \sqrt{a}}$ is
identically distributed with $\tilde{X}$ defined with the
same SDE (\ref{SDE1}) where the variable $a$ is replaced by
$\tilde{a}:= a + \tau_-$ with the initial condition
$\tilde{X}(0) = \sqrt{a}$. Observe now $\sqrt{a} <
\sqrt{\tilde{a}}$, but it does not matter as we will be
allowed to reduce the interval $[-\sqrt{a},\sqrt{a}]$ a bit
without affecting the relevant terms in our asymptotics.
More precisely, the interval we will study for the middle
is  $[-\sqrt{a} + \delta, \sqrt{a} - \delta]$, where
$\delta := \sqrt[4]{\ln a/a}$.

\noindent Let $\tilde{T}_x$ denote the first passage times
of $\tilde{X}$.  The inequality $\sqrt{\tilde{a}} -
\tilde{\delta} \leq \sqrt{a}$ gives:
\begin{align}
\PP_{\tau_-, \sqrt{a}} (T_{-\sqrt{a}} < \infty)
= \PP_{\sqrt{a}}(\tilde{T}_{-\sqrt{a}} < \infty) \leq \PP_{\sqrt{\tilde{a}} -
\tilde{\delta}}(\tilde{T}_{-\sqrt{\tilde{a}} +
\tilde{\delta}} < \infty). \label{majodebut}
\end{align}

\subsection{Preliminary upper bound inside the parabola}

Recall $\delta := \sqrt[4]{\ln a/a}$ we would
like to find the asymptotic of:
\begin{align}\label{studiedprobamiddle}
 \PP_{\sqrt{a}- \delta}\left(T_{-\sqrt{a} + \delta} < \infty\right) \geq \PP_{\sqrt{a}}\left(T_{-\sqrt{a}} <
\infty\right).
\end{align}

\noindent The key is Girsanov formula.

\paragraph{Girsanov formula.}
To evaluate the cost inside the parabola, we use the
Cameron-Martin-Girsanov formula which allows us to change the
drift coefficient of $X$ and evaluate the relevant
probability by analyzing the new process. The issue is to find a
suitable new drift. The best would be to have the one
corresponding to the conditional distribution of the
diffusion $X$ under the event it crosses the critical
parabola in a finite time: this would lead to an exact
formula. As we are not able to do that, we use an
approximation of the conditional diffusion. In this
direction, we introduce a new SDE in which the drift of $X$
is reversed with a correction term given by a function
$\varphi$:
\begin{align*}
dY_t = \left(-a + Y_t^2 - t + \varphi(Y_t)\right) dt +
\frac{2}{\sqrt{\beta}} dB_t.
\end{align*}
Let  $T'_x$ denote the first passage time of the process $Y$
to the level $x$.

With this diffusion and under some mild assumptions, the
Cameron-Martin Girsanov formula gives for every non
negative measurable function $f$ and every fixed time $t > 0$ and
level $l \in [0,1]$:
\begin{align}
\EE_{\sqrt{a} - l}\left(f\left(X_u, u \leq
T_{-\sqrt{a}+l}\wedge t\right)\right) = \EE_{\sqrt{a} -
l}\left(f\left(Y_u, u \leq T'_{-\sqrt{a}+l}\wedge
t\right)\;\exp\big(G_{T'_{-\sqrt{a}+l} \wedge
t}(Y)\big)\right).
\end{align}
More details about this and the application of the Girsanov
formula can be found in Section \ref{subsubsec:Girsanovformula}.
The exact expression of
$G_{T'_{-\sqrt{a}+l}}(Y)$ contains $\beta/4$ times
\begin{align}- \frac{8}{3} a^{3/2} - \frac{4}{3} l^3 + 4
\sqrt{a}\, l^2 + 2 l\, T'_{-\sqrt{a}+l} - 2 \sqrt{a}\,
T'_{-\sqrt{a}+l} + \left(\frac{8}{\beta} -2\right)
\int_0^{T'_{-\sqrt{a}+l}} Y_t dt. \label{eqstar}
\end{align}
plus terms involving the function $\varphi$.

Notice that we can already see the correct coefficient in
front of the main term. We are now confronted with an
expectation over the paths of the diffusion $Y$. To find a
good estimate of the exponential martingale, we need to
control the first passage time to the level $-\sqrt{a}+l$
and check that the diffusion do not go far above $\sqrt{a}$
when this time is finite (in order to control the last integral in \eqref{eqstar}).  We will at first focus on a
preliminary bound, for which we do not need any result
about the first passage time. The price of this approach is
that it uses a finer control of the paths which go down.

\paragraph{Control of the paths.}

\noindent To have a good control of the paths, we examine
at first a smaller interval than
$[\sqrt{a}-\delta,-\sqrt{a}+\delta]$. On this new interval,
the diffusion will go down without hitting $\sqrt{a}$ with
a sufficiently large probability. Indeed, we show that for
$\eps := \frac4{\sqrt{\beta}}\sqrt{\ln a}/\sqrt[4]{a}$ we
have
\begin{equation}
\label{pathcontrol}
     \PP_{\sqrt{a}-\eps} (T_{-\sqrt{a} + \eps} < \infty)
     =(1-o(1))
\PP_{\sqrt{a}-\eps} \Bigl( T_{-\sqrt{a} + \eps} < \infty,
T_{-\sqrt{a}+\eps} < T_{\sqrt{a}-\eps/2}\Bigr).
\end{equation}
This is accomplished by two applications of the strong
Markov property. From now on, we denote $T_+ =
T_{\sqrt{a}-\eps/2}$, $T_- = T_{-\sqrt{a} + \eps}$ and
$\mathcal{A} = \{  T_{+} >  T_{-} \}$, and have:
\begin{eqnarray*}
          \PP_{\sqrt{a}-\eps} \Bigl( T_{-} < \infty, \mathcal{A}^c \Bigr)
             \le            \PP_{\sqrt{a}-\frac{\eps}{2}} \Bigl( T_{-} < \infty  \Bigr)
           \le
            \PP_{\sqrt{a} - \frac{\eps}{2}} \Bigl(  T_{\sqrt{a}-\eps}  < \infty \Bigl)   \PP_{\sqrt{a}-\eps} \Bigl( T_{-} < \infty  \Bigr).
\end{eqnarray*}
Both inequalities use the fact that the hitting probability
of any level below the starting place is decreasing in the
starting time of the diffusion $X$. Rearranging this formula we get
\begin{align*}
 \PP_{\sqrt{a}-\eps} (T_{-} < \infty) \leq \frac{\PP_{\sqrt{a}-\eps}  ( T_{-} < \infty, \mathcal{A} )}{\PP_{\sqrt{a} - \frac{\eps}{2}}(T_{\sqrt{a}-\eps}  = \infty)}\; .
\end{align*}
\noindent Lemma \ref{lem:pathcontrol} shows that as
$a\to\infty$ the denominator converges to 1.

\paragraph{Application of the Girsanov formula.}

We will now study the right hand side of
(\ref{pathcontrol}) $\PP_{\sqrt{a}-\eps} ( T_{-} < \infty,
\mathcal{A} )$ which is the probability of an event under
which the absolute value of the diffusion is bounded by
$\sqrt{a}$.

In order to find an upper bound for the term
$\eqref{eqstar}$ with $l=\eps$, it would be useful to have
a bound on the time $T_-$. As we do not have any
information about that yet, one idea is to choose the
function $\varphi$ such that the coefficient appearing in
front of the $\sqrt{a}\, T_-$ term becomes negative. The
function $\varphi_1$ of Section \ref{subsec:afirstresult}
works and it gives an upper bound that is sharp up to, but
not including, the exponent of the polynomial factor. This
is the content of Lemma \ref{lessprecise}. As $a\to\infty$
we conclude
\begin{align}
\label{formulelessprecise}
\PP_{\sqrt{a}-\eps}(T_{-} < \infty)  \leq
\exp
\left(-\frac{2}{3}\beta a^{3/2} +
 O(\ln a)\right),
\qquad \eps = \mbox{$\frac{4}{\sqrt\beta}$}\sqrt{\ln
a}/\sqrt[4]{a}.
\end{align}
In addition, Lemma \ref{lessprecise} also shows that with
$\xi=c_3\ln a/\sa$ we have
\begin{align}
\label{e:timecontrol} \PP_{\sqrt{a}-\delta}(\xi\le
T_{-\sa+\delta} < \infty) \leq \exp \left(-\frac{2}{3}\beta
a^{3/2} -\beta\ln a \right), \qquad \delta = \sqrt[4]{\ln
a/a}
\end{align}
i.e.\ long times have polynomially smaller probability than
what we expect for normal times.

\subsection{Final upper bound inside the parabola}

\paragraph{Decomposition according to the time the
process spends near $\sqrt{a}$.}

Let us introduce the last passage time to the level
$\sqrt{a} - \delta$:
\begin{align*}
L := \sup\{t \geq 0 \;:\; X_t = \sqrt{a}-\delta\}, \qquad \delta = \sqrt[4]{\ln
a/a}.
\end{align*}
and use the temporary notation $\tau=c \ln a/\sa$. We can
use the less precise result and, similarly to the part
above the parabola, make a change of variable $\hat{a} := a
+ \tau$. The strong Markov property and the monotonicity
gives
\begin{align}
\PP_{\sqrt{a}- \delta}\left(T_{-\sqrt{a}+\delta} < \infty,
L > \tau\right) \le \PP_{(\tau, \sqrt{a} - \delta)}
\left(T_{-\sqrt{a}+\delta} < \infty\right).
\end{align}
Since $\sqrt{a} - \delta \geq \sqrt{\hat{a}} - \hat{\eps}$
we get the upper bound
\begin{align}
\PP_{(0,\sqrt{\hat{a}} - \hat{\eps})}
\left(\tilde{T}_{-\sqrt{\hat{a}}+\hat{\eps}} <
\infty\right) \leq \exp\left(-2/3\beta a^{3/2} - \beta \ln
a\right) \notag
\end{align}
as long as $c$ (depending on $\beta$) is large enough. The
last inequality for some constant $c
> 0$ follows from the preliminary bound
(\ref{formulelessprecise}). This again is polynomially
smaller than the probability we expect for the main event.

For finer information about the last passage time,
one can divide the paths according to the value of this
last passage time to formalize the idea the process does
not earn a lot when it stays near $\sqrt{a} - \delta$.
\begin{align*}
\PP_{\sqrt{a}- \delta}(T_{-\sqrt{a}+\delta} < \infty,\; L <
\tau) &\le  \sum_{k=0}^{\left\lfloor c\ln a \right\rfloor}
\PP \left(L \in [\frac{k}{\sqrt{a}},
\frac{k+1}{\sqrt{a}}),\; T_{-\sqrt{a}+\delta} <
+\infty\right)
\end{align*}
The event in the sum implies that $X$ visits $\sqrt a- \delta$ in the time interval but not later. By the strong Markov property for the first visit after time $k/\sqrt a$ in that time interval and monotonicity, the sum can be bounded above by
$$
\left\lfloor 1+c\ln a
\right\rfloor\PP_{\sqrt{a}-\delta}\left(L <
\frac{1}{\sqrt{a}},\; T_{-\sqrt{a}+ \delta} <
\infty\right).
$$
To complete the picture, we find an upper bound of the
process between the times $0$ and $1/\sqrt{a}$. This will
be possible thanks to a comparison with  reflected
Brownian motion. Indeed, the drift is non-positive above
the critical parabola, and so up to time $1/\sqrt a$ the process $X_t$ started at $\sqrt{a+1/\sqrt a}$ is stochastically dominated by $\sqrt{a+1/\sqrt a}$ plus reflected Brownian motion.
This leads to the very rough
estimate:
\begin{align*}
\PP_{\sqrt{a + 1/\sqrt{a}}}\left(\sup_{t \in [0,
1/\sqrt{a}]} X_t > c_2 \sqrt{a}\right)
&\leq P\left(\sup_{s \in [0,1/\sqrt{a}]} |B_s| > (c_2-2) \sqrt{a}\right) \\
&\leq \exp\left(-\frac{1}{2}(c_2-2)^2 a^{3/2}\right).
\end{align*}

If $c_2$ is large enough (precisely if $c_2 > 2/\sqrt{3}
\sqrt{\beta} + 2$), this event becomes negligible compared
to the probability to cross the whole parabola. Therefore,
we can examine the studied probability under the event that
$X_t$ is bounded from above by $c_2 \sqrt{a}$ for $t \leq
1/\sqrt{a}$.

\medskip

\noindent We denote by $\mathcal{C}$ the event under which
the above conditions are satisfied
\begin{align*}
 \mathcal{C} := \left\{L < 1/\sqrt{a}, \; \sup_{t \in [0,1/\sqrt{a}]} X_t < c_2 \sqrt{a}\right\}.
\end{align*}
We have just seen that
\begin{align}\label{majoC}
 \PP_{\sqrt{a}-\delta}\left(T_{-\sqrt{a}+\delta} < \infty\right) \leq
(2c \ln a)\PP_{\sqrt{a}-\delta}\left(T_{-\sqrt{a}+\delta} <
\infty, \;\mathcal{C}\right) + O\left(\exp(-2/3 \beta
a^{3/2} - \beta \ln a)\right).
\end{align}

\paragraph{Application of the Girsanov formula.} We will
apply the Girsanov formula with a function $\varphi =
\varphi_2$ such that it compensates exactly the integral
\begin{align*}
 \int_0^{T'_{-\sqrt{a} + \delta}} \left(\frac {8}{\beta} - 2\right) Y_t - 2 \sqrt{a} \, dt.
\end{align*}

The suitable function $\varphi_2$ blows up at $\sqrt{a}$
and $\sqrt{a}$. Therefore we only use it in the interval
$[-\sqrt{a} + \delta, \sqrt{a} - \delta]$ and set
$\varphi_2 := 0$ outside $[-\sqrt{a}, \sqrt{a}]$. Partially
because of those blowups, this function creates error terms
involving the first passage time to the level $-\sqrt{a} +
\delta$, which, if finite, by \eqref{e:timecontrol} can be
assumed to satisfy $T_{-\sa+\delta} < \xi$. Girsanov's
formula applied to the event  $\left\{T_{\sqrt{a} + \delta}
< \xi,\; \mathcal{C}\right\}$ with (\ref{majoC}) leads to
the fundamental upper bound of Proposition
\ref{lem:fondforminside}:
\begin{align}\label{majomilieu}
\PP_{\sqrt{a}- \delta}\left(T_{-\sqrt{a} + \delta} <
\xi\right) \leq \exp\left(-\frac{2}{3} \beta a^{3/2} -
\frac{3}{8} \beta \ln a + O (\sqrt{\ln a})\right).
\end{align}

\paragraph{Conclusion for the upper bound.} Using
(\ref{majomilieu}) with $\tilde{a}$ as in the inequality
(\ref{majodebut}) of the part above the parabola, we deduce
the upper bound part of \eqref{Statement}.

\subsection{Outline of the lower bound}

The lower bound, as often in the literature, is easier. It
suffices to consider the most probable paths. For the part
above the parabola, we can write the inequality
\begin{align*}
\PP_{\infty}(T_{-\infty} < \infty) \geq
\PP_{\infty}\left(T_{\sqrt{a}} \leq \frac{3}{8} \frac{\ln
a}{\sqrt{a}}\right) \, \PP_{\left(\frac{3}{8} \frac{\ln
a}{\sqrt{a}}, \sqrt{a}\right)}\left(T_{-\infty} <
\infty\right)
\end{align*}
and use Proposition \ref{resuabove} to bound the first factor. The second factor can be bounded below by the following:
\begin{align}
\PP_{\left(\frac{3}{8} \frac{\ln a}{\sqrt{a}},
\sqrt{a}\right)}\left(T_{\sqrt{\tilde{a}} - \tilde{\delta}}
< \frac{1}{\sqrt{a}}\right)\times
 \PP_{\sqrt{\tilde{a}} - \tilde{\delta}}\left(T_{-\sqrt{\tilde{a}} + \tilde{\delta}} < \tilde{\xi}\right)  \times \PP_{\left(\tilde{\xi},-\sqrt{\tilde{a}} +
\tilde{\delta}\right)}(T_{-\infty} < \infty).
\label{remainterm}
\end{align}
where $\tilde{a} := 3/8 \ln a/\sqrt{a} + 1/\sqrt{a}$.

A domination by a Brownian motion with drift permits to
deal with the first term of (\ref{remainterm}) which is of the order $\exp(-
O(\sqrt{\ln a}))$. The middle term can be controlled with
the same event $\tilde{\mathcal{C}}$ introduced for the
upper bound with $a$ replaced by $\tilde{a}$. We apply
Girsanov formula directly with the SDE used for the precise
result of the upper bound to obtain Proposition
\ref{lem:fondforminside}:
\begin{align*}
\PP_{\sqrt{\tilde{a}} -
\tilde{\delta}}\left(T_{-\sqrt{\tilde{a}} + \tilde{\delta}}
< \tilde{\xi}, \, \tilde{\mathcal{C}}\right) \geq
\exp\left(-\frac{2}{3} \beta \tilde{a}^{3/2} - \frac{3}{8} \beta
\ln \tilde{a} + o(\ln \tilde{a})\right) \, \PP_{\sqrt{\tilde{a}}-
\tilde{\delta}}\left(T'_{-\sqrt{\tilde{a}} +
\tilde{\delta}} < \tilde{\xi},
\,\tilde{\mathcal{C}'}\right)
\end{align*}
(recall the ``prime'' notation deals with the ``new''
diffusion $Y$).

A comparison with the solution of a simple differential
equation will show that the solution of the new SDE indeed
has a ``large'' probability to go down to
$-\sqrt{\tilde{a}}+\tilde{\delta}$ before the time
$\tilde{\xi}$. This is the content of Lemma
\ref{completelower}.

We conclude the proof of the lower bound by checking that
the last term of (\ref{remainterm}) is also negligible: the
proof is similar to the study above the parabola and can be
found in Proposition \ref{propounderparabola}. \qed

\section{Above the parabola}\label{aboveparabola}
We show at first that we need a certain amount of time to
reach the level $\sqrt{a}$, typically a time $\tau := 3/8
\ln a/\sqrt{a}$. The following Proposition proves indeed
that the probability the process hits $\sqrt{a}$
significantly before $\tau$ is small, but becomes quite
large if this hitting happens around the time $\tau$.
\begin{propo}\label{resuabove}
The following upper bound holds for all sufficiently large  $a$:
\begin{align} \label{aboveresultupper}
\PP_{\infty}\left(T_{\sqrt{a}} \leq \left(\frac{3}{8} -
\frac{1}{\ln a}\right) \frac{\ln a}{\sqrt{a}}\right) \leq
\exp\left(-\frac{4}{3} \beta e^{2 \sqrt{\ln a}}\right).
\end{align}
There exists $c_0 > 0$ depending only on
$\beta$ such that we also have the lower bound:
\begin{align} \label{aboveresultlower}
\PP_{\infty}\left(T_{\sqrt{a}} \leq \frac{3}{8} \frac{\ln
a}{\sqrt{a}}\right) \geq c_0 \frac{1}{\sqrt{\ln a}}.
\end{align}
\end{propo}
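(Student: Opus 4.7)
The plan is to reduce both bounds to an autonomous comparison SDE obtained by dropping the $t$ in the drift, linearize around its explicit deterministic flow, and apply Gaussian estimates to the resulting Ornstein--Uhlenbeck-like process. Since the drift $t+a-X^2$ is non-decreasing in $t$, the standard comparison principle gives $X_t\ge Y_t$ for all $t$, where $Y$ solves
$$dY=(a-Y^2)\,dt+\tfrac{2}{\sqrt{\beta}}\,dB,\qquad Y(0)=+\infty.$$
This yields the upper-bound reduction $\PP_\infty(T^X_{\sa}\le\tau_-)\le\PP_\infty(T^Y_{\sa}\le\tau_-)$. For the lower bound, the opposite comparison against the autonomous process obtained by replacing $a$ by $\tilde a:=a+\tau$ (with $\tau=\tfrac{3}{8}\ln a/\sa$) is valid on $[0,\tau]$, and $\tilde a=a(1+o(1))$, so I again reduce to an autonomous equation.

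I then compare $Y$ to the deterministic trajectory $x(t)=\sa\coth(\sa\,t)$ solving $\dot x=a-x^2$ from $+\infty$, which satisfies $x(t)-\sa\sim 2\sa\,e^{-2\sa t}$ for large $\sa t$. The difference $U_t:=Y_t-x(t)$ obeys $dU=-(Y+x)U\,dt+\tfrac{2}{\sqrt\beta}\,dB$. During the initial descent of duration $O(1/\sa)$, the strongly contracting $-Y^2$ drift forces $Y$ to track $x$ -- most cleanly verified via the Riccati substitution $W:=1/(Y-\sa)$, which converts the fast-descent phase into a nearly-linear SDE. Past this time, the equation for $U$ is effectively an Ornstein--Uhlenbeck process with restoring rate roughly $2\sa$ and stationary standard deviation of order $1/(\sqrt\beta\,a^{1/4})$.

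For the upper bound, the event $T^Y_{\sa}<\tau_-$ forces $U$ to reach the value $-(x(\tau_-)-\sa)$, which is of order $a^{-1/4}e^{2\sqrt{\ln a}}$, i.e.\ a deviation of order $e^{2\sqrt{\ln a}}$ OU standard deviations below the mean. Applying the Gaussian estimate \eqref{TailBMup} to a suitable supremum of the Gaussian kernel $\tfrac{2}{\sqrt\beta}\int_0^t e^{-2\sa(t-s)}\,dB_s$ that approximates $U$ produces the double-exponential bound \eqref{aboveresultupper}. For the lower bound, $x(\tau)-\sa$ is only of order $a^{-1/4}$, a bounded multiple of the OU standard deviation; the small-ball estimate \eqref{TailBMlower} applied to the driving Brownian motion, after rescaling time by $\sa$ and space by $(\beta\sa)^{-1/2}$, forces $U_\tau<-(x(\tau)-\sa)$ with probability at least $c_0/\sqrt{\ln a}$, giving \eqref{aboveresultlower}.

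The main technical obstacle is the transition between the two regimes of the autonomous SDE: the fast $1/t$-like descent from $+\infty$ and the Ornstein--Uhlenbeck regime near $\sa$. I expect the Riccati transform $W=1/(Y-\sa)$ to do the heavy lifting, reducing the descent to a nearly-linear SDE whose noise can be compared to a standard Brownian motion with error estimates sharp enough that by time $o(\tau_-)$ the stochastic path sits on the deterministic curve $x(t)$ up to a perturbation the subsequent OU analysis can absorb.
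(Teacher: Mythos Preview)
Your reduction to autonomous SDEs (parameter $a$ for the upper bound, $\tilde a=a+\tau$ for the lower bound) and your identification of the explicit flow $x(t)=\sqrt a\coth(\sqrt a\,t)$ match the paper exactly. Where you diverge is the core comparison step. You subtract the \emph{deterministic flow}, $U=Y-x$, linearize, and are then forced to split into a fast-descent regime (handled by a Riccati substitution) and an OU regime near $\sqrt a$. The paper instead subtracts the \emph{Brownian motion}: $Z=Y-\tfrac{2}{\sqrt\beta}B$ satisfies the random ODE $Z'=a-(Z+\tfrac{2}{\sqrt\beta}B)^2$, and on the high-probability event $\{\sup_{[0,\tau']}|B|\le 1\}$ this is sandwiched between two equations of the same form $F'=a-CF^2$ with a constant $C$ close to $1$. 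The explicit solution $F(t)=\sqrt{a/C}\,\coth(\sqrt{aC}\,t)$ handles both your regimes uniformly, so the transition you flag as ``the main technical obstacle'' simply does not arise; the upper bound then follows from a single application of \eqref{TailBMup} to $\sup|B|$. Your OU route for the upper bound is viable and would in fact yield a stronger double-exponential, but at the cost of the two-regime bookkeeping the paper avoids.

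There is, however, a genuine gap in your lower-bound argument. The small-ball event on $B$ keeps $U=Y-x$ \emph{near zero} (via $U_t\approx\tfrac{2}{\sqrt\beta}\int_0^t e^{-2\sqrt a(t-s)}\,dB_s$), hence keeps $Y$ near $x(t)>\sqrt a$; it cannot force $U_\tau<-(x(\tau)-\sqrt a)$ as you claim---quite the opposite. To reach $\sqrt a$ you need a specific downward fluctuation of size $\sim a^{-1/4}$, not a small-ball constraint. The paper's fix is to aim first for the intermediate level $\sqrt a+15/a^{1/4}$ by time $\tau-1/\sqrt a$: the $Z$-versus-$F$ comparison with parameter $a+\tau$ reduces this to asking that $M'-B(\tau-1/\sqrt a)$ be $O(a^{-1/4})$, where $M'=\sup_{[0,\tau]}B$; by the reflection principle this quantity is distributed as $|B(\tau-1/\sqrt a)|$, which is $O(a^{-1/4})$ with probability $\sim c_0/\sqrt{\ln a}$ since $\mathrm{Var}\,B(\tau)\sim \ln a/\sqrt a$. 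The remaining gap from $\sqrt a+15/a^{1/4}$ down to $\sqrt a$ is covered in time $1/\sqrt a$ by dominating $X$ by Brownian motion (the drift is nonpositive above the parabola), contributing only a constant factor.
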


\begin{proof}

\noindent {\bf First part.} Fix $\sigma \in (0,1/8)$ and
let
$$\tau' := \left(\frac{3}{8} - \sigma\right) \frac{\ln a}{\sqrt{a}}.$$ It helps to remove the time dependence from the drift coefficient of (\ref{SDE1}). In this direction, consider the SDE:
\begin{align}\label{SDEaboveupper}
\left\{ \begin{array}{ll}
  dY_t &= (a-Y_t^2) dt + \frac{2}{\sqrt{\beta}} dB_t \\
Y_0 &= +\infty.
 \end{array}
\right.
\end{align}

\noindent The process $X$ stochastically dominates
$Y$. Therefore, if $T^Y_{\sqrt{a}}$ is the first passage
time to $\sqrt{a}$ for the diffusion $Y$, we have:
\begin{align*}
\PP_{\infty}\left(T_{\sqrt{a}} \leq \tau'\right) &\leq
\PP\left(T^Y_{\sqrt{a}} \leq \tau'\right).
\end{align*}

Now, we study the difference $Z_t := Y_t -
\frac{2}{\sqrt{\beta}} B_t$ where $B$ is the Brownian
motion driving $Y$ in (\ref{SDEaboveupper}). It satisfies the
(random) ODE:
\begin{align}\label{e:odecomparison}
 dZ_t = \left[a - Z_t^2 \left(1+ \frac{2}{\sqrt{\beta}} \frac{B_t}{Z_t}\right)^2 \right] dt.
\end{align}

\noindent Define $M := \sup\{|B_t|, t \in [0,\tau']\}$ and
notice that thanks to the Brownian tail $(\ref{TailBMup})$,
$$\PP(M \geq 1) \leq 4 \exp\left(-\frac{1}{2 (3/8 - \sigma)} \frac{\sqrt{a}}{\ln a}\right).$$

\medskip

\noindent By definition, for all $t \in [0, \tau' \wedge
T^Y_{\sqrt{a}}]$, the process $Z_t$ is above $\sqrt{a} -
2/\sqrt{\beta} M$ and consequently above the (random)
solution of the differential equation:
\begin{align*}
\left\{ \begin{array}{l}
         F'(t) = a - C f^2(t) \\
F(0) = +\infty
        \end{array} \right.
\end{align*}
where $C$ has the following expression:
$$C := \left(1 + \frac{4}{\sqrt{\beta}} \frac{M}{\sqrt{a} - \frac{2}{\sqrt{\beta}}M}\right)^2.$$

\medskip

\noindent This differential equation admits almost surely
the unique solution
\begin{align*}
\forall t \geq 0, \;F(t) = \sqrt{\frac{a}{C}} \coth(\sqrt{a
C} \, t).
\end{align*}

\noindent Hence,
\begin{align}
\PP\left(T^Y_{\sqrt{a}} \leq \tau'\right) &\leq \PP\left(\inf_{t \in [0,\tau']}\left(F(t) + \frac{2}{\sqrt{\beta}} B_t\right) \leq \sqrt{a}\right) \notag \\
&\leq \PP\left(F(\tau') - \sqrt{a} \leq
-\frac{2}{\sqrt{\beta}} \inf_{t \in [0,\tau']} B_t\right).
\label{compeqdiff}
\end{align}

\noindent Let us compute $F(\tau')$:
\begin{align*}
F(\tau') &= \sqrt{\frac{a}{C}}\; \frac{1+ e^{-2\tau'\sqrt{a
C}}}{1-e^{-2 \tau' \sqrt{a C}}}  = \sqrt{\frac{a}{C}}\;
\left(1 + 2 e^{-2 \tau' \sqrt{a C}} + O\left(e^{-4 \tau'
\sqrt{a C}}\right)\right).
\end{align*}

\noindent Under the event $ \{M \leq 1\}$,
\begin{align*}
\sqrt{C} = 1 + \frac{2}{\sqrt{\beta}} \frac{M}{\sqrt{a}} +
O\left(\frac{1}{a}\right).
\end{align*}

\noindent This implies:
\begin{align}
\sqrt{\frac{a}{C}} = \sqrt{a} - \frac{2}{\sqrt{\beta}} M +
O\left(\frac{1}{\sqrt{a}}\right),
 \label{eq1sqrt}
\end{align}
\noindent and
\begin{align}
\exp\left(- 2 \tau' \sqrt{a C}\right) &= \exp\left(- 2 (3/8 - \sigma) \ln a \left(1 + O\left(\frac{1}{\sqrt{a}}\right)\right)\right) \notag \\
&= \frac{1}{a^{\frac{3}{4} - 2 \sigma}} + O\left(\frac{\ln
a}{a^{\frac{5}{4} - 2 \sigma}}\right). \label{eq2expo}
\end{align}

\noindent Taylor expansions $(\ref{eq1sqrt})$ and
$(\ref{eq2expo})$ give:
\begin{align}
F(\tau') &= \left(\sqrt{a} -\frac{2}{\sqrt{\beta}} M +
O\left(\frac{1}{\sqrt{a}}\right)\right)
\left(1 + \frac{2}{a^{3/4 - 2 \sigma}} + O\left(\frac{\ln a}{a^{5/4 - 2 \sigma}}\right)\right) \notag \\
&= \sqrt{a} - \frac{2}{\sqrt{\beta}} M + \frac{2}{a^{1/4- 2
\sigma}} + O\left(\frac{1}{\sqrt{a}}\right).
\label{devtf(T)}
\end{align}

\noindent Inequality $(\ref{compeqdiff})$ becomes:
\begin{align*}
\PP_{\infty}\left(T_{\sqrt{a}} \leq \tau'\right)
&\leq \PP\left(F(\tau') - \sqrt{a} \leq \frac{2}{\sqrt{\beta}} M, \;M \leq 1\right) + \PP(M > 1) \\
&\leq \PP\left(\sqrt{\beta}\frac{1}{a^{1/4- 2 \sigma}} +
O\left(\frac{1}{\sqrt{a}}\right)\leq M, \;M \leq 1\right)
+ \PP(M > 1) \\
&\leq 4 \exp\left(-\frac{4}{3} \beta\frac{a^{4\sigma}}{\ln
a} + o\left(\frac{a^{4\sigma}}{\ln a}\right)\right).
\end{align*}

\noindent If we take $\sigma = \frac{1}{\sqrt{\ln a}}$, we
have:
\begin{align*}
\PP_{\infty}\left(T_{\sqrt{a}} \leq \left(\frac{3}{8} -
\frac{1}{\sqrt{\ln a}}\right) \frac{\ln a}{\sqrt{a}}
\right) \leq 4 \exp\left(-\frac{4}{3} \beta e^{4 \sqrt{\ln
a} - \ln \ln a}\right),
\end{align*}
and so inequality (\ref{aboveresultupper}) holds.

\paragraph{Second part.} The proof is similar. Let us check the
main lines. At first, we have:
\begin{align}
\PP_{\infty}\left(T_{\sqrt{a}}\leq \tau\right) &\geq
\PP_{\infty}\left(T_{\sqrt{a} + \frac{15}{\sqrt[4]{a}}}
\leq \tau - \frac{1}{\sqrt{a}}\right) \times \PP_{\sqrt{a}
+ \frac{15}{\sqrt[4]{a}}}\left(T_{\sqrt{a}} \leq
\frac{1}{\sqrt{a}}\right). \notag
\end{align}
Now the process $(X_t,\;t\in[0,T_{\sqrt{a}}])$ starting at
a value above $\sqrt{a}$ has a non-positive drift and is
therefore stochastically dominated by Brownian motion.
Thus the second factor can be bounded below by
\begin{align*}
P\left(B_{\frac{1}{\sqrt{a}}} \geq
\frac{15}{\sqrt[4]{a}}\right) =P(B_1 \geq
15). \notag
\end{align*}
For the first factor, instead of the SDE (\ref{SDEaboveupper}) we choose:
\begin{align}\label{SDEabovelower}
\left\{ \begin{array}{ll}
  dY_t &= (a+\tau-Y_t^2) dt + \frac{2}{\sqrt{\beta}} dB_t \\
Y_0 &= +\infty
 \end{array}
\right.
\end{align}
and study the difference $$Z_t := Y_t -
\frac{2}{\sqrt{\beta}} B_t.$$ Set $M' := \sup\{B_t,\; t\in
[0,\tau- 1/\sqrt{a}]\}$. For every $t \in [0, (\tau-
1/\sqrt{a}) \wedge T^Y_{\sqrt{a}}]$, the process $Z_t$ is
below
$$F(t) := \sqrt{\frac{a + \tau}{C}} \coth\left(\sqrt{(a + \tau) C}\, t\right)$$ where
$$C := 1 - \frac{4}{\sqrt{\beta}} \frac{M'}{\sqrt{a} - \frac{2}{\sqrt{\beta}}M'}.$$
The Taylor expansion of $F(\tau-1/\sqrt{a})$ under $\{M'
\leq 1\}$ gives:
\begin{align*}
F\left(\tau-\frac{1}{\sqrt{a}}\right) = \sqrt{a} + \frac{2
e^2}{a^{1/4}} + \frac{2}{\sqrt{\beta}} M' +
O\left(\frac{1}{\sqrt{a}}\right).
\end{align*}
Therefore
\begin{align}
\PP_{\infty}\left(T_{\sqrt{a} + \frac{15}{\sqrt[4]{a}}} \leq
\tau - \frac{1}{\sqrt{a}}\right) &\geq
P\left(F(\tau-\frac{1}{\sqrt{a}}) -
\frac{2}{\sqrt{\beta}} B\left(\tau-
\frac{1}{\sqrt{a}}\right) \leq \sqrt{a} +
\frac{15}{\sqrt[4]{a}}\right). \notag
\end{align}
Since $M'-B(\tau - 1/\sqrt{a})$ has the same law as the
reflected Brownian motion at time $\tau - 1/\sqrt{a}$ and
$15 - 2 e^2 > 0$, we get the  lower bound
\begin{align}
P\left(\frac{2}{\sqrt{\beta}} \left|B\left(\tau -
\frac{1}{\sqrt{a}}\right)\right| \leq \frac{15-2
e^2}{\sqrt[4]{a}}\right) \geq c_0 \frac{1}{\sqrt{\ln a}}.
\label{defc1}
\end{align}
Here $c_0$ in represents an adequate constant depending
only on $\beta$.
\end{proof}

\section{Inside the parabola}
The exponential cost comes from this stretch. This section will
be devoted to the proof of the following Proposition:
\begin{propo}\label{resuinside}
Recall
$\delta := \sqrt[4]{\ln a}/\sqrt[4]{a}$, we have:
\begin{align*}
\PP_{\sqrt{a}-\delta}(T_{-\sqrt{a}+\delta} < \infty) =
\exp\left(-\frac{2}{3} \beta a^{3/2} - \frac{3}{8} \beta
\ln a + O(\sqrt{\ln a})\right).
\end{align*}
\end{propo}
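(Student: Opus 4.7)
The plan is to combine the preliminary Girsanov bound \eqref{formulelessprecise}, the path decomposition sketched around \eqref{majoC}, and a more refined Girsanov computation with a function $\varphi_2$ that cancels the awkward running integral in \eqref{eqstar}; the matching lower bound uses the same change of measure in reverse together with a comparison of $Y$ to a deterministic flow.

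For the upper bound, I would first split by the magnitude of $T_{-\sa+\delta}$. On $\{T_{-\sa+\delta}\ge \xi\}$ with $\xi=c_3\ln a/\sa$, estimate \eqref{e:timecontrol} (already available through Lemma \ref{lessprecise}) gives a contribution that is polynomially smaller than the target $e^{-\frac23\beta a^{3/2}}$ and is absorbed. On $\{T_{-\sa+\delta}<\xi\}$, I would further decompose by the last passage time $L$ to $\sa-\delta$. When $L\ge \tau := c\ln a/\sa$ for $c$ large, the strong Markov property at $\tau$ combined with the change of parameter $\hat a := a+\tau$ reduces to \eqref{formulelessprecise} with $\hat\eps$ in place of $\delta$, producing another polynomially negligible term. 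When $L<\tau$, partitioning $[0,\tau]$ into intervals of length $1/\sa$ and using the strong Markov property at the first visit to $\sa-\delta$ in each interval yields, up to a multiplicative $O(\ln a)$, the event $\mathcal{C}=\{L<1/\sa,\ \sup_{t\le 1/\sa} X_t<c_2\sa\}$; the supremum bound is obtained by dominating $X$ above the critical parabola by a reflected Brownian motion, which for $c_2>2/\sqrt{3\beta}+2$ costs only $e^{-\frac12(c_2-2)^2 a^{3/2}}$ and is negligible.

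The heart of the upper bound is then to apply the Cameron--Martin--Girsanov formula on the event $\{T_{-\sa+\delta}<\xi\}\cap\mathcal{C}$ with a correction $\varphi_2$ chosen so that
$$\int_0^{T'_{-\sa+\delta}}\!\Big(\big(\tfrac{8}{\beta}-2\big)Y_t-2\sa\Big)\,dt$$
is \emph{exactly} cancelled. Since the probability of the event under the tilted law is at most $1$, the Girsanov exponent controls everything. Plugging $l=\delta$ into the explicit boundary terms of \eqref{eqstar} produces $-\tfrac23\beta a^{3/2}+O(\sa\,\delta^2)=-\tfrac23\beta a^{3/2}+O(\sqrt{\ln a})$, and a careful tracking of the cutoff of $\varphi_2$ near $\pm\sa$ (where it blows up) using the time constraint $T_{-\sa+\delta}<\xi$ and the supremum bound in $\mathcal{C}$ yields the refined $-\tfrac38\beta\ln a$ coefficient plus another $O(\sqrt{\ln a})$ remainder, giving \eqref{majomilieu}.

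For the lower bound, I would reverse roles: under the $Y$-law, Girsanov gives
$$\PP_{\sa-\delta}(T_{-\sa+\delta}<\infty)\ \ge\ \exp\!\Big(-\tfrac23\beta a^{3/2}-\tfrac38\beta\ln a+O(\sqrt{\ln a})\Big)\,\PP^{Y}_{\sa-\delta}\bigl(T'_{-\sa+\delta}<\xi,\ \mathcal{C}'\bigr),$$
so it suffices to bound the last factor below by $\exp(O(\sqrt{\ln a}))$. Because the drift of $Y$ pushes it down, I would compare $Y$ to the solution of the noiseless ODE and use the small-ball estimate \eqref{TailBMlower} applied on the short time scale $\xi=O(\ln a/\sa)$; the noise fluctuations are $O(\sqrt{\ln a}/a^{1/4})$, which is enough to trap $Y$ in a narrow tube around a deterministic path that descends from $\sa-\delta$ to $-\sa+\delta$ within time $\xi$ and stays below $c_2\sa$ throughout.

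The main obstacle is the Girsanov step with $\varphi_2$: $\varphi_2$ must be chosen to cancel the $Y$-integral exactly on $[-\sa+\delta,\sa-\delta]$, cut off cleanly outside $[-\sa,\sa]$ to avoid its singularities at $\pm\sa$, and then the remaining pieces — the explicit boundary terms from \eqref{eqstar}, the boundary corrections from the cutoff of $\varphi_2$, and the stochastic integral generated by $\varphi_2$'s derivative — must be shown to combine into exactly $-\tfrac38\beta\ln a+O(\sqrt{\ln a})$ rather than a coefficient off by an additive constant. The time control \eqref{e:timecontrol} together with $\mathcal{C}$ is precisely what forces these error terms to be $O(\sqrt{\ln a})$ instead of $O(\ln a)$.
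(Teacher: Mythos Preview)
Your proposal is correct and follows essentially the same route as the paper: the same splitting on $T_{-\sa+\delta}$ via \eqref{e:timecontrol}, the same last-passage decomposition leading to the event $\mathcal C$ and the multiplicative $O(\ln a)$ factor (as in \eqref{majoC}), the same choice of $\varphi_2$ in the Girsanov step to cancel the running integral and produce Proposition~\ref{lem:fondforminside}, and the same ODE comparison for $Y$ together with the small-ball estimate \eqref{TailBMlower} to bound the tilted probability below (Lemma~\ref{completelower}). One small clarification: the coefficient $-\tfrac38\beta\ln a$ does not arise from the \emph{cutoff} of $\varphi_2$ but from the antiderivative term $\phi_2(Y_0)-\phi_2(Y_{T'_-})=-\tfrac32\ln a$ in \eqref{star}--\eqref{starstar}; the cutoff and the time constraint $T'_-<\xi$ are what keep the remaining integral terms at $O(\sqrt{\ln a})$ rather than $O(\ln a)$.
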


\subsection{Control of the path behavior}

Here we show a lemma about the return to $-\sqrt{a} +
\eps$.
\begin{lemma}\label{lem:pathcontrol}
For $\eps := \frac{4}{\sqrt{\beta}}\sqrt{\ln
a}/\sqrt[4]{a}$ as $a\to\infty$  we have
$\PP_{\sqrt{a}-\eps/2} ( T_{\sqrt{a}-\eps} = \infty) \to
1.$
\end{lemma}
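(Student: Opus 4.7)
Introduce the shifted coordinate $U_t := \sqrt a - X_t$, so that $U_0 = \eps/2$ and the target event becomes $\{T^U_\eps < \infty\}$; the original SDE reads
$$dU_t = -(t + 2\sqrt a\,U_t - U_t^2)\,dt - (2/\sqrt\beta)\,dB_t.$$
Near $U = 0$ this is approximately a mean-reverting Ornstein--Uhlenbeck process with rate $\sim 2\sqrt a$ and stationary standard deviation $\sigma_\ast = 1/\sqrt{\beta\sqrt a}$, pushed strictly downward by the $-t$ drift. Our target $\eps = (4/\sqrt\beta)\sqrt{\ln a}/a^{1/4}$ sits at roughly $4\sqrt{\ln a}$ standard deviations above the starting point $U_0 = \eps/2$, so hitting $\eps$ should be polynomially rare.

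The main step is pathwise comparison with the explicit time-inhomogeneous OU
$$dV_t = -\bigl(t + (2\sqrt a - \eps)\,V_t\bigr)\,dt - (2/\sqrt\beta)\,dB_t,\qquad V_0 = \eps/2,$$
driven by the same Brownian motion. For $v \in [0,\eps]$ one checks that $(\text{drift of }V) - (\text{drift of }U) = v(\eps - v) \ge 0$, so the SDE comparison theorem yields $U_t \le V_t$ as long as $U$ stays in $[0,\eps]$. The process $V$ is an explicit Gaussian process with mean $\bar V(t) = e^{-(2\sqrt a - \eps)t}\eps/2 - t/(2\sqrt a - \eps) + O(1/a)$ and variance at most $1/(\beta\sqrt a)$. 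The decisive number is $(\eps/2)^2\beta\sqrt a/2 = 2\ln a$, which gives the pointwise Gaussian tail $\PP(V_t \ge \eps) \le a^{-2}$ at $t = 0$ and strictly sharper bounds for $t > 0$ thanks to the downward mean drift. Discretising time at the mixing scale $1/\sqrt a$ and summing via the standard maximal inequality for OU processes delivers $\PP(\sup_{t\ge 0} V_t \ge \eps) = o(1)$, which bounds the contribution of trajectories that hit $\eps$ without first exiting $[0,\eps]$ through the bottom.

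The remaining case is that $U$ exits $[0,\eps]$ through the lower endpoint $0$, i.e.\ $X$ first climbs up to $\sqrt a$ at some random time $\tau$ before any candidate hit of $\sqrt a - \eps$. Applying the strong Markov property at $\tau$ together with the monotonicity of the hitting probability in both the starting position and the starting time (the latter equivalent, via Fact 3.1 of \cite{balint}, to enlarging the parameter $a$, under which the solution is everywhere larger) bounds the restart contribution by $p := \PP_{\sqrt a - \eps/2}(T_{\sqrt a - \eps} < \infty)$ itself, producing a renewal inequality $p \le o(1) + \gamma p$. Closing this loop cleanly is the main technical obstacle: it relies precisely on the $-t$ drift of $V$, which makes each successive excursion face a strictly larger effective $a$ and therefore a strictly smaller Gaussian tail, so the induced geometric-type iteration sums to $o(1)$ as required.
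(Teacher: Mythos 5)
Your comparison inside the strip is sound: on $\{U_t\in[0,\eps]\}$ the drift of $V$ dominates that of $U$, so the event that $U$ reaches $\eps$ before exiting through $0$ is contained in $\{\sup_t V_t\ge\eps\}$, and the Gaussian computation $(\eps/2)^2\beta\sqrt a/2=2\ln a$ together with the decreasing mean does give $\PP(\sup_{t\ge0}V_t\ge\eps)=o(1)$ (indeed polynomially small, sharper than the paper's $a^{-1/4+o(1)}$). The genuine gap is the other case, which you correctly identify as "the main technical obstacle" but do not resolve. Starting from $\eps/2$ the drift $-2\sqrt a\,U$ pushes $U$ down through $0$ with probability tending to $1$, so in your renewal inequality $p\le o(1)+\gamma p$ the constant $\gamma$ is essentially $1$ if it denotes the probability of exiting through $0$, and the inequality is vacuous. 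If instead $\gamma$ denotes the probability of exiting through $0$ \emph{and later returning to} $\eps/2$, then $\gamma$ is indeed small, but proving that is an estimate of exactly the same type as the lemma itself (a descent of $\eps/2$ against the restoring drift, now started from $\sqrt a$), so nothing has been gained. Your proposed mechanism for closing the loop --- that the $-t$ drift makes each successive excursion cheaper --- does not work on its own: the return times $\tau_k$ to $\eps/2$ need not be separated, so the per-excursion probabilities $q_{\tau_k}$ need not decay geometrically, and you would additionally have to bound the number of excursions per unit time.

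The paper avoids this entire issue with one move you are missing: it replaces $X$ by its version reflected \emph{downward} at the starting level $\sqrt a-\eps/2$. Since the drift $t+a-x^2$ is decreasing in $x$ on the relevant range, the reflected process lies pathwise below the original, hence reaches $\sqrt a-\eps$ whenever the original does; so it suffices to show the reflected process never descends to $\sqrt a-\eps$. The reflected process never leaves $[\sqrt a-\eps,\sqrt a-\eps/2]$ before that descent, so the uniform drift bound $t+\frac12\sqrt a\,\eps$ applies for all time, and the problem reduces to showing that Brownian motion with drift $-\frac12t^2-qt$, Skorokhod-reflected at its running minimum, never rises by $\eps/2$; the paper then controls the infinitely many "attempts" by indexing them by the running-minimum level (the events $D_n$) rather than by time, so that the per-attempt cost grows with $n$ and the sum converges. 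I recommend you either adopt the reflection device, or replace the renewal step by a direct proof that $\PP_{(t,\sqrt a)}(T_{\sqrt a-\eps/2}<\infty)\le 1/2$ uniformly in $t\ge0$ --- but note that the latter again requires controlling excursions above the starting point, which is precisely what the reflection trick is for.
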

\begin{proof}

Certainly, the probability that $X$ begun at $\sqrt{a} -
\eps/2$ never reaches $\sqrt{a} - \eps$ is bounded below by
the same probability where $X$ is replaced by its reflected
(downward) at $\sqrt{a} - \eps/2$ version. Further, when
restricted to the space interval $[\sqrt{a} - \eps,
\sqrt{a} - \eps/2]$, the $X$-diffusion has drift everywhere
bounded below by $t + 1/2 \sqrt{a} \,\eps$. Thus, we may
consider instead the same probability for the appropriate
reflected Brownian motion with quadratic drift.

\medskip

\noindent To formalize this, it is convenient to shift
orientation. Let now
$$\bar{X} := \frac{2}{\sqrt{\beta}}
B(t) - \frac{1}{2} t^2 - q\, t, \qquad q:= \frac12
\sqrt{a}\,\eps.
$$
Let $X^*$ denote reflected (upward)
at the origin. Namely,
\begin{align}\label{reflectedprocessX*}
X^*(t) = \bar{X}(t) - \inf_{s \leq t} \bar{X}(s).
\end{align}

\noindent If we can show that $P(X^* \mbox{ never reaches }
\eps/2)$ tends to $1$ when $a \to \infty$, then it
will also be the case for the $X$-probability in question.

\medskip

We need to introduce the first hitting time of level $y$
for the new process $\bar{X}$: $\tau_y := \inf\{t \geq 0
\::\: \bar{X}(t) = y\}$. For each $n \in \N$, define the
event:
\begin{align*}
 D_n = \left\{\bar{X}(t) \mbox{ hits } -(n-1) \eps/4 \mbox{ for some } t \mbox{ between } \tau_{-n \eps/4}
\mbox{ and } \tau_{-(n+1) \eps/4}\right\}.
\end{align*}

\noindent From the representation
(\ref{reflectedprocessX*}), one can see that $\{\sup_{t
\geq 0} X^*(t) > \eps/2\}$ implies that some $D_n$ must
occur. Indeed, for this  $\bar X$ must go above its past
minimum by at least $\eps/2$, so in this case it would have
to retreat at least one level before establishing reaching
a new minimum level among multiples of $\eps/4$. Define the
event
\begin{align*}
\mathcal{E} = \left\{\bar{X}(t) \geq -\frac{1}{2}t^2 -2qt
-1 \right\},
\end{align*}
The event  $\mathcal E^c$ is equivalent to the Brownian
motion  $\frac{2}{\sqrt{\beta}}B(t)$ hitting a line of
slope $-q$ starting at $-1$. Since $|B(t)|$ is sublinear,
this will not happen for large enough $q$. So
\begin{align}\label{Ec}
 P(\mathcal{E}^c) \to 0 \quad \mbox{ when }a \to +\infty.
\end{align}
On $\mathcal{E}$, when the following term under the square root is
positive, we have
\begin{align*}
\tau_{-x} \geq  \sqrt{2}\sqrt{2 q^2+x-1}-2 q
\end{align*}
Assume that $q\ge 1$. A calculation shows that if $q\le
\sqrt{x}/2$ then the above inequality implies $\tau_{-x}
\ge \sqrt{x}/2$. So on $\mathcal E$ for all $x\ge 0$ we
have
$$\tau_{-x}+q\ge q\vee \sqrt{x}/2,$$
and so for all $t\ge 0$
\begin{equation}
  \label{process}
 \bar{X}(\tau_{-x} + t)-\bar{X}(\tau_{-x}) \le \frac{2}{\sqrt{\beta}} B(\tau_{-x}+t) - (q\vee \sqrt{x}/2)t.
\end{equation}
Setting $x=\eps n/4$ we see that for all $n \geq 0$
\begin{align*}
P\left(D_n \cap \mathcal{E}\,|\, \mathcal{F}_{\tau_{-n
\eps/4}}\right) &\leq P\left(\mbox{the process on the right
of \eqref{process} hits } \eps/4 \mbox{ before }
-\eps/4\right).
\end{align*}
The distribution of that process is just Brownian motion
with drift. By a stopping time argument for the exponential
martingale $\exp(\gamma \hat B_t - t\gamma^2/2 )$ with
$\gamma = (q\vee \sqrt{n\eps}/4)\sqrt{\beta}$ the above
probability equals $1/(1+e^{\gamma\eps\sqrt{\beta}/8})\le
e^{-\gamma\eps/\sqrt{\beta}/8}$. We
get
\begin{align*}
P(D_n \cap \mathcal{E}) \leq \exp\left(-(q\vee
\sqrt{n\eps}/4){\beta}\eps/8\right).
\end{align*}
Recall that \begin{align*} P\left(\sup_{t \geq 0} X^*(t) >
\eps/2\right) &\leq P(\mathcal{E}^c) + \sum_{n \ge 0}
P(\mathcal{E} \cap D_n).
\end{align*}
The sum of terms where $n\eps \le (4q)^2$ is bounded above
by $ (1+(4q)^2{\eps}^{-1})e^{-\eps q{\beta}/8 }. $ The sum
of the rest is not more than
$$
\sum_{n\ge (4q)^2/\eps} \exp\left( -\eps^{3/2}\beta
\sqrt{n}/32\right)\le c_1 \frac{q}{\eps^{2}}\exp\left(
-\eps q\beta/8\right),
$$
where $c_1$ depends on $\beta$ only. Replacing $\eps$ and
$q$ by their expressions in terms of $a$, and using
\eqref{Ec} we obtain:
\[
P\left(\sup_{t \geq 0} X^*(t) > \eps/2\right) \leq
P(\mathcal{E}^c) +  a^{3/4+o(1)} e^{- ({\beta}/16
)(16/\beta)\ln a} =o(1). \qedhere
\]\end{proof}

\subsection{Application of the Girsanov
formula}\label{subsubsec:Girsanovformula}

\noindent For every $\varphi \in C^2(\R,\R)$ such that
$\sup_{x \in \R} |\varphi(x)| \leq \sqrt{a}$ (the function
$\varphi$ is in fact small compared to the other terms, and
it will be chosen after), we would like to consider the
following SDE (defined on the same probability spaces as
(\ref{SDE1})):
\begin{align}\label{SDE2}
dY_t = \left(-a + Y_t^2 - t + \varphi(Y_t)\right) dt +
\frac{2}{\sqrt{\beta}} dB_t
\end{align}

\begin{rem}
The drift of this SDE is the reversal of the drift in the
initial SDE (\ref{SDE1}). The solution of the new SDE
starting around $\sqrt{a}$ is a good candidate for the
process $X$ conditioned to blow up to $-\infty$ when $a$
goes to $+\infty$. Its expression comes from minimizing
(approximately) the potential:
\begin{align*}
\int_{0}^{s}(g'(u) - (u+a-g^2(u))^2) du.
\end{align*}
over the set of functions $g$ such that $g(0) = \sqrt{a}
\mbox{ and } g(s) = - \sqrt{a}$.
\end{rem}

If we look at events under which the diffusion is bounded,
it is easy to modify the drift of the new diffusion outside
the studied domain and prove that the Novikov condition is
satisfied as long as the examined events are in the space
$\mathcal{F}_{t}$ for a fixed $t > 0$. Let us fix a time $t
> 0$, a level $l \in (0,1)$ and denote by $T_{\pm} :=
T_{\pm (\sqrt{a} - l)}$ the first passage times to $\pm
(\sqrt{a} - l)$ for the diffusion $X$ (respectively
$T'_{\pm} :=  T'_{\pm (\sqrt{a} - \delta)}$ for $Y$). We
take an event $E \in \mathcal{F}_{t} \cap
\mathcal{F}_{T'_-}$ under which the paths of the diffusion
are bounded by a deterministic value, which can depend on
$a$ and $\beta$. Since $E$ is $\mathcal F_t$-measurable,
Girsanov's theorem gives
\begin{align}
\PP_{\sqrt{a} - l}(E) &= \EE_{\sqrt{a} - l}\left(1_{E}
\exp(G_{t}(Y))\right). \notag
\end{align}
The Radon Nikodym density $\exp(G_{\cdot \wedge t\wedge
T'_-}(Y))$ is a bounded martingale, and $E$ is
$\mathcal{F}_{T'_-}$-measurable, so by the optional
stopping theorem the above quantity equals
\begin{align}
\EE_{\sqrt{a} - l}\Big(\EE_{\sqrt{a} - l}\Big(1_E
\exp(G_{t}(Y))\Big{|}
  \mathcal{F}_{T'_-}\Big)\Big)=\EE_{\sqrt{a} - l}\left(1_{E}\;\exp\left(G_{T'_-\wedge
t}(Y)\right)\right). \label{Girsanovwitht}
\end{align}
In the following, we consider $E$ of the form $E = \{T_- <
\infty\}\cap E_1$. The assumption on $E$ requires $E_1$
being an event under which the diffusion is bounded from
above. Taking the limit $t\to +\infty$ leads to the
fundamental formula:
\begin{align}
\PP_{\sqrt{a} - l}(T_{-} < \infty, E_1) = \EE_{\sqrt{a} -
l}\left(1_{T'_-< \infty,\;
E'_1}\;\exp\big(G_{T'_-}(Y)\big)\right).
\end{align}

\noindent Thanks to It\^o's formula we can write  the
exponential martingale $\frac{4}{\beta}G_{T'_-}(Y)$ as
\begin{align}
& 2 \int_0^{T'_-} (t+a-Y_t^2) \,dY_t \; \label{star} \\
& \quad+\phi(Y_0) - \phi(Y_{T'_-}) \notag \\
&\quad + \int_0^{T'_-} \frac{2}{\beta}
\varphi'(Y_t) + \frac{1}{2} \varphi(Y_t)^2 + \varphi(Y_t)
(Y_t^2 - a - t) \,dt,
\label{starstar}
\end{align}
where  $\varphi'$ denotes the derivative of $\varphi$ and
$\phi$ the indefinite integral. Again by It\^o's formula,
we can compute the first term \eqref{star} above.
\begin{align*}
\mbox{\eqref{star}} = 2 a (Y_{T'_-}- Y_0) - \frac{2}{3} (Y_{T'_-}^3 -
Y_0^3) + \left(\frac{8}{\beta} - 2\right) \int_0^{T'_-} Y_t
dt + 2 T'_- Y_{T'_-}.
\end{align*}

\noindent Replacing $Y_{T'_-}$ by its value, we obtain the
expression (valid under $E$):
\begin{align}
\mbox{\eqref{star}} = - \frac{8}{3} a^{3/2} - \frac{4}{3}\, l^3 + 4
\sqrt{a}\, l^2 + 2 \,l\, T'_- - 2 \sqrt{a} T'_- +
\left(\frac{8}{\beta} -2\right) \int_0^{T'_-} Y_t dt.
\label{eqstar2}
\end{align}

\subsection{The preliminary upper bound}\label{subsec:afirstresult}

\noindent Let $c_1$ be a constant such that $c_1 >
(|8/\beta - 2| - 2)\vee 0$. Recall that
$$
\delta := \sqrt[4]{\ln a/a}, \qquad \eps =
\frac4{\sqrt{\beta}}\sqrt{\ln a}/\sqrt[4]{a}
$$
and take the function $\varphi_1$ defined by
\begin{align}\label{def1phi}
\begin{array}{lccc}
\varphi_1 : x \mapsto \left \{ \begin{array}{cl}
 \frac{c_1\,\sqrt{a}}{a - x^2} & \mbox{ if } x \in (-\sqrt{a}+ \delta,\sqrt{a}-\delta) \\
 0 & \mbox{ if } x \not\in (-\sqrt{a},\sqrt{a})
 \end{array} \right.
\end{array}
\end{align}
and extend this function on the entire real line such that
$\varphi_1$ remains a smooth function supported on
$[-\sqrt{a},\sqrt{a}]$ (this is possible for all large
enough ``$a$''). Of course, there are many functions
satisfying the above conditions but we just need to fix
one. Let $\phi_1$ be an antiderivative of $\varphi_1$.

A first step is to prove a less precise upper bound which
does not give us the constant in front of the logarithm
term of (\ref{Statement}). In this subsection, the
notations $Y$, $T'_{-}$, $\mathcal{A'}$ will always refer
to the definitions using this particular $\varphi_1$. We
have the events
$$
\mathcal A = \{T_{\eps-\sa}<T_{\sa-\eps/2}\}, \qquad
\mathcal C= \{T_{\delta-\sa}<T_{c_2\sa},L\le\frac1\sa\}.
$$
\begin{lemma}\label{lessprecise}
(a)
The following inequality holds:
\begin{align}\label{resulessprecise}
\PP_{\sqrt{a}-\eps}(T_{\eps-\sqrt{a}} < \infty,\;
\mathcal{A})
 \leq \exp\left(-\frac{2}{3}\beta a^{3/2} + O(\ln
 a)\right).
\end{align}
(b) for some $c_3\ge 1$ we also have
\begin{align}
\PP_{\sqrt{a}-\delta}\big(c_3 \ln a/\sa \le T_{\delta-\sqrt
a} < \infty,\; \mathcal{C}\big)
 \label{resulesspreciseb}\leq \exp\left(-\frac{2}{3}\beta a^{3/2} -\beta \ln a\right).
\end{align}
\end{lemma}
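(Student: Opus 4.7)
The plan is to apply the Girsanov framework of Section~\ref{subsubsec:Girsanovformula} with $\varphi=\varphi_1$ in both parts and then bound the resulting exponent $G_{T'_-}(Y)$ term by term. For (a) I take $l=\eps$ and the event $E=\{T_-<\infty\}\cap\mathcal{A}$; for (b) I take $l=\delta$ and $E=\{c_3\ln a/\sa\le T_-<\infty\}\cap\mathcal{C}$. Under both events the $Y$-path is deterministically bounded in $a$ and $\beta$ (between $-\sa+\eps$ and $\sa-\eps/2$ under $\mathcal{A}'$; between $-\sa+\delta$ and $c_2\sa$ under $\mathcal{C}'$), so a stopping-time truncation verifies Novikov and \eqref{Girsanovwitht} yields $\PP(E)=\EE(1_{E'}\exp G_{T'_-}(Y))$ after letting $t\to\infty$.

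The quantity $\frac{4}{\beta}G_{T'_-}$ decomposes as \eqref{eqstar2} plus the boundary term $\phi_1(Y_0)-\phi_1(Y_{T'_-})$ plus \eqref{starstar}. The deterministic $-\frac{8}{3}a^{3/2}$ in \eqref{eqstar2} supplies the leading $-\frac{2}{3}\beta a^{3/2}$. The constants $-\frac{4}{3}l^{3}+4\sa\,l^{2}=O(\sa\,l^{2})=O(\ln a)$ for both choices of $l$; the antiderivative $\phi_1(x)=\frac{c_1}{2}\ln\frac{\sa+x}{\sa-x}$ evaluated at $\pm(\sa-l)$ contributes another $O(\ln(\sa/l))=O(\ln a)$; and on the good region $(-\sa+\delta,\sa-\delta)$ both $\frac{1}{2}\varphi_1^{2}$ and $\frac{2}{\beta}\varphi_1'$ are bounded by $O(1/\eps^{2})=o(\sa)$, so these integrands contribute only an $o(\sa\,T'_-)$ piece to the coefficient of $\sa\,T'_-$.

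The crucial algebraic point is the defining identity $\varphi_1(x)(x^{2}-a)=-c_1\sa$ for $x\in(-\sa+\delta,\sa-\delta)$. In (a), since $\eps/2>\delta$ for all large $a$, the event $\mathcal{A}'$ keeps $Y$ inside the good interval, so $\int_0^{T'_-}\varphi_1(Y_t)(Y_t^{2}-a)\,dt=-c_1\sa\,T'_-$ exactly. Combined with the $-2\sa\,T'_-$ from \eqref{eqstar2} and the crude bound $|(\frac{8}{\beta}-2)\int_0^{T'_-}Y_t\,dt|\le|\frac{8}{\beta}-2|\sa\,T'_-$, the net coefficient of $\sa\,T'_-$ in $\frac{4}{\beta}G_{T'_-}$ is $-c_1-2+|\frac{8}{\beta}-2|+o(1)$, which is strictly negative by the choice $c_1>(|\frac{8}{\beta}-2|-2)\vee 0$. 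Since $T'_-\ge 0$ this contribution is nonpositive and can be discarded, leaving $G_{T'_-}\le-\frac{2}{3}\beta a^{3/2}+O(\ln a)$; taking expectation and using $\EE(1_{E'})\le 1$ proves (a).

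For (b) the $Y$-path may excurse above $\sa-\delta$ and up to $c_2\sa$ during $[0,L']$, but $\varphi_1(Y_t)(Y_t^{2}-a)\le 0$ everywhere and $L'\le 1/\sa$ under $\mathcal{C}'$, so the identity relaxes to $\int_0^{T'_-}\varphi_1(Y_t)(Y_t^{2}-a)\,dt\le-c_1\sa(T'_--L')\le-c_1\sa\,T'_-+c_1$, and the corresponding excursion contribution to $(\frac{8}{\beta}-2)\int Y_t\,dt$ is $O(c_2\sa\cdot L')=O(1)$. The net coefficient of $\sa\,T'_-$ therefore remains strictly negative, and on $\{T'_-\ge c_3\ln a/\sa\}$ it produces an additional contribution $\le -c'c_3\ln a$ for some $c'=c'(\beta)>0$; choosing $c_3=c_3(\beta)$ large enough to absorb the $O(\ln a)$ boundary errors yields $G_{T'_-}\le-\frac{2}{3}\beta a^{3/2}-\beta\ln a$, proving (b). The main obstacle throughout is careful bookkeeping: every auxiliary term produced by $\varphi_1$ (the endpoint values of $\phi_1$, the $\varphi_1^{2}$ and $\varphi_1'$ integrands, the $2l\,T'_-$ piece, and the excursion contributions in (b)) must be controlled so that its $\sa\,T'_-$-coefficient remains strictly smaller than the margin $c_1+2-|\frac{8}{\beta}-2|$ that the choice of $c_1$ reserves for us.
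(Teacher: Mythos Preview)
Your argument is correct and follows essentially the same route as the paper: apply Girsanov with $\varphi_1$, use the identity $\varphi_1(x)(x^{2}-a)=-c_1\sa$ on the good interval, bound $\varphi_1^{2}$ and $\varphi_1'$ by $O(1/\eps^{2})=o(\sa)$, and exploit the strictly negative coefficient $-c_1-2+|8/\beta-2|$ of $\sa\,T'_-$ (with the excursion up to time $L'\le 1/\sa$ contributing only $O(1)$ in part~(b)). The only cosmetic difference is that the paper keeps the $O(1/\eps^{2})$ constants explicit in \eqref{ineqstarstar} while you absorb them into the $o(\sa)$ coefficient immediately; both treatments are equivalent.
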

Part (a) with Lemma \ref{lem:pathcontrol} immediately give the
following.
\begin{corollary}
 The following upper bound holds:
\begin{align*}
\PP_{\sqrt{a}-\eps}(T_{-} < \infty) \leq
\exp\left(-\frac{2}{3}\beta a^{3/2} + O(\ln a)\right).
\end{align*}
\end{corollary}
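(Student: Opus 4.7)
The plan is to apply the Girsanov identity from Section~3.2 with $\varphi = \varphi_1$, rewriting the probability of interest as an expectation against the $Y$-diffusion of the density $\exp(G_{T'_-}(Y))$. For part (a) I start at $\sqrt{a}-\eps$, stop at $T'_{\eps-\sa}$, and restrict to $\mathcal{A}'$, under which $|Y_t|\le \sa-\eps/2$ throughout. For part (b) I start at $\sa-\delta$, stop at $T'_{\delta-\sa}$, restrict to $\mathcal{C}'$, and split the time interval into $[0,1/\sa]$ (where $|Y_t|\le c_2\sa$ by definition) and $[1/\sa,T'_-]$ (where $|Y_t|\le \sa-\delta$ by the last-passage condition $L'\le 1/\sa$).

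I would next read off $\frac{4}{\beta}G_{T'_-}(Y)$ from \eqref{eqstar2} together with the boundary term $\phi_1(Y_0)-\phi_1(Y_{T'_-})$ and the drift-correction integrals. The engine of the proof is the algebraic identity $\varphi_1(x)(x^2-a)\equiv-c_1\sa$ on the support, which turns $\int_0^{T'_-} \varphi_1(Y_t)(Y_t^2-a-t)\,dt$ into $-c_1\sa\,T'_- - \int_0^{T'_-} t\,\varphi_1(Y_t)\,dt$. Collecting every contribution proportional to $\sa\,T'_-$ -- namely $-2\sa\,T'_-$ from \eqref{eqstar2}, $-c_1\sa\,T'_-$ from the identity above, and $(8/\beta-2)\int Y_t\,dt$ bounded by $|8/\beta-2|\sa\,T'_-$ via $|Y_t|\le\sa$ -- yields a coefficient bounded above by $-2-c_1+|8/\beta-2|$, which is \emph{strictly negative} by the choice $c_1>(|8/\beta-2|-2)\vee 0$. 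The non-random leading piece $-\frac{8}{3}a^{3/2}$ then gives the exponential factor $\exp(-\frac{2}{3}\beta a^{3/2})$.

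Everything else is $O(\ln a)$ or carries a \emph{further} negative multiple of $T'_-$. Indeed, $4\sa\,l^2=O(\ln a)$ for $l\in\{\eps,\delta\}$; the boundary term $\phi_1(Y_0)-\phi_1(Y_{T'_-})=\frac{c_1}{2}\ln\Big(\frac{(\sa+Y_0)(\sa-Y_{T'_-})}{(\sa-Y_0)(\sa+Y_{T'_-})}\Big)$ is $O(\ln a)$ because $Y_0$ and $Y_{T'_-}$ are at distance $l$ from $\pm\sa$; the term $-\int t\,\varphi_1(Y_t)\,dt$ is negative and harmless; and the singular integrals $\int\varphi_1'(Y_t)\,dt$ and $\int\varphi_1(Y_t)^2\,dt$ are of order $T'_-/\eps^2 = O(\sa\,T'_-/\ln a)$, which for large $a$ is absorbed by the strictly negative $\sa\,T'_-$ coefficient. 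Hence $\exp(G_{T'_-}(Y))\le\exp(-\frac{2}{3}\beta a^{3/2}+O(\ln a))$ on the event, and taking expectation gives part (a). For part (b), the same bookkeeping applies, and restricting to $T'_-\ge c_3\ln a/\sa$ injects an extra $-C\beta c_3\ln a$ through the negative $\sa\,T'_-$ coefficient; choosing $c_3$ large enough makes this dominate the $O(\ln a)$ corrections, producing the extra $-\beta\ln a$.

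The main obstacle is the careful bookkeeping of the cross-coefficient $-2-c_1+|8/\beta-2|$: one must simultaneously verify that it remains strictly negative after every $T'_-$-linear contribution is counted, \emph{and} that the singular-looking integrals $\int\varphi_1'$, $\int\varphi_1^2$ only produce corrections of order $1/\ln a$ relative to $\sa\,T'_-$. This is precisely why $\varphi_1$ is chosen with an $(a-x^2)^{-1}$ pole: that form makes $\varphi_1(x)(x^2-a)$ constant, so the unknown $\sa\,T'_-$ term from \eqref{eqstar2} is absorbed into a controllable negative contribution rather than turned into a stochastic integral one would have to estimate pathwise.
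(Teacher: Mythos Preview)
Your Girsanov computation is correct and matches the paper's proof of Lemma~\ref{lessprecise} almost line by line: the identity $\varphi_1(x)(x^2-a)=-c_1\sa$, the collection of the $\sa\,T'_-$ terms into the strictly negative coefficient $-2-c_1+|8/\beta-2|$, the $O(\ln a)$ boundary contribution from $\phi_1$, and the absorption of $\int\varphi_1'$ and $\int\varphi_1^2$ as $o(\sa)\,T'_-$ are all exactly as in the paper. But what you have written is a proof of Lemma~\ref{lessprecise}, not of the Corollary. Your repeated references to ``part~(a)'' and ``part~(b)'' confirm this: the Corollary has only one statement, and part~(b) of the Lemma is irrelevant to it.

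The gap is that the Corollary asks for the \emph{unrestricted} probability $\PP_{\sqrt{a}-\eps}(T_{-}<\infty)$, whereas your entire argument lives on the event $\mathcal{A}$ (equivalently $\mathcal{A}'$). You need $|Y_t|\le\sa-\eps/2$ both to apply the Girsanov identity of Section~\ref{subsubsec:Girsanovformula} (which requires bounded paths) and to control $\int\varphi_1'$, $\int\varphi_1^2$ and $\int Y_t\,dt$. You never say how to dispose of $\mathcal{A}^c$. The paper does this via the path-control reduction \eqref{pathcontrol}: two applications of the strong Markov property together with Lemma~\ref{lem:pathcontrol}, which shows $\PP_{\sa-\eps/2}(T_{\sa-\eps}=\infty)\to 1$, give
\[
\PP_{\sqrt{a}-\eps}(T_{-}<\infty)\le \frac{\PP_{\sqrt{a}-\eps}(T_{-}<\infty,\,\mathcal{A})}{\PP_{\sa-\eps/2}(T_{\sa-\eps}=\infty)} = (1+o(1))\,\PP_{\sqrt{a}-\eps}(T_{-}<\infty,\,\mathcal{A}),
\]
after which your (correct) bound on the right-hand side finishes the job. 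This is precisely why the paper's proof of the Corollary is the single sentence ``Part~(a) with Lemma~\ref{lem:pathcontrol} immediately give the following.''
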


\begin{proof}
\noindent {\bf Part (a).} First let $T_-=T_{\sa-\eps}$.
Consider the process $Y$ defined with the function
$\varphi_1$. The equality (\ref{Girsanovwitht}) gives:
\begin{align*}
\PP_{\sqrt{a}-\eps}\Big(T_{-} < \infty,\; \mathcal{A}\Big)
= E_{\sqrt{a}-\eps}\Big(1_{\{T'_{-}<
\infty\;\mathcal{A'}\}}\;\exp\big(G_{T'_-}(Y)\big)\Big)
\end{align*}
with $G_{T'_-}(Y)$ given by (\ref{star}-\ref{starstar}).

\medskip
To find an upper bound of the last term \eqref{starstar} in
$G$, we remark that the chosen function $\varphi_1$ on
$[-\sqrt{a}+\eps, \sqrt{a}-\eps/2]$ attains its maximum at
time $\sqrt{a} - \eps/2$. Under the event $\mathcal{A'}$,
it gives:
\begin{align}
\mbox{\eqref{starstar}} \leq \left(\frac{2 c_1^2 + 8/\beta
c_1}{\eps^2} - c_1 \sqrt{a}\right) T'_-.
\label{ineqstarstar}
\end{align}

\noindent Moreover:
\begin{align}
\phi_1(Y_0) - \phi_1(Y_{T'_-}) = c_1\,
\ln\left(2\frac{\sqrt{a}}{\eps} + 1\right).
\label{eqprimitive}
\end{align}

\noindent Thanks to $(\ref{eqstar2})$,
$(\ref{ineqstarstar})$ and $(\ref{eqprimitive})$, we
deduce:
\begin{align*}
\frac{4}{\beta} G_{T'_-}(Y) + \frac{8}{3} a^{3/2} & \leq 4
\sqrt{a}\, \eps^2 + 2 \eps T'_- +
\left(\left|\frac{8}{\beta} - 2\right|-2 - c_1\right)\sqrt{a} T'_-\\
& \quad +  \left(\frac{8}{\beta} c_1 + 2
c_1^2\right)\frac{1}{\eps^2} T'_- +  c_1 \,
\ln\left(2\frac{\sqrt{a}}{\eps}+1\right).
\end{align*}

\noindent We now take $c_1$ such that $|8/\beta - 2| - 2 -
c_1 < 0$ and the coefficient in front of $\sqrt{a} T'_-$
becomes negative and dominates the terms involving $T'_-$.
The last one creates the logarithmic error, and
\eqref{resulessprecise} follows from
\[
 G_{T'_-}(Y) \leq -\frac{2}{3} \beta a^{3/2} + \left(\frac{3}{16}c_1\beta +16\right)\ln a + o(\ln
 a).
\]
\medskip

\noindent {\bf Part (b).} Now let $T_-=T_{\sa-\delta}$. Just
as in part (a), we need to bound the Girsanov terms. To
find an upper bound of the last term \eqref{starstar},
namely
$$
\int_0^{T'_-} \frac{2}{\beta} \varphi'(Y_t) + \frac{1}{2}
\varphi(Y_t)^2 + \varphi(Y_t) (Y_t^2 - a - t) \,dt
$$
note that $\varphi^2$ and $\varphi'$ are both uniformly
$o(\sa)$. On the other hand, we have $\varphi(Y_t)(a-Y^2)$
is nonnegative. Moreover, it is greater than $c_1
\sa+o(\sa)$ as long as $ L'\le t\le T'_{_-}$. So on $\mathcal
C'$ we have the lower bound
\begin{align}
\mbox{\eqref{starstar}} \leq o(\sa)T_-'- c_1 \sqrt{a}(T'_- - L').
\label{ineqstarstarb}
\end{align}
The other inequalities are similar to part $(a)$ with
$\delta$ replaced by $\eps$, except the last term in
\eqref{eqstar2} gives an extra term due to the fact that
$Y$ is only bounded by $c_2\sqrt a$ up to time $L'$.

\noindent Thanks to $(\ref{eqstar2})$,
$(\ref{ineqstarstarb})$ and $(\ref{eqprimitive})$, we
deduce:
\begin{align*}
\frac{4}{\beta} G_{T'_-}(Y) + \frac{8}{3} a^{3/2} & \leq 4
\sqrt{a}\, \delta^2 + 2 \delta T'_- +
\left(\left|\frac{8}{\beta} - 2\right|-2 - c_1\right)\sqrt{a} T'_-\\
& \quad + (c_1 + c_2)\sa L'+o(\sa) T'_- +  c_1 \,
\ln\left(2\frac{\sqrt{a}}{\delta}+1\right).
\end{align*}

\noindent From part (a), we have $c:=|8/\beta - 2| - 2 -
c_1<0$. We chose $c_3\ge 1$ large enough so that the terms
involving $T'_-\ge c_3 \ln a/a$ together with the $\ln a$
term coming from the antiderivative are less than $-\beta \ln a$, i.e.\
$\frac{\beta}{4} c_3 c+\frac{3}{16}c_1\beta<-\beta$. This completes the
proof of \eqref{resulesspreciseb} since
\[
 G_{T'_-}(Y) \leq -\frac{2}{3} \beta a^{3/2} -\beta \ln a + o(\ln
 a). \qedhere
\]\end{proof}

\subsection{Precise asymptotics for the exponent}\label{logprecision}

Recall $\delta := \sqrt[4]{\ln a/a}$, $L$ the
last passage time to $\sqrt{a} - \delta$, and the event
introduced for technical reasons:$$\mathcal{C} := \left\{L
< 1/\sqrt{a},\;\; \sup_{t \in [0,1/\sqrt{a}]} X_t < c_2
\sqrt{a}\right\}$$ defined in the outline of the proof. We
will study in this section $\PP_{\sqrt{a}-
\delta}\left(T_{-\sqrt{a}+\delta} < \infty,
\;\mathcal{C}\right)$.

In order to obtain the coefficient in front of the
logarithm term, we need to be more precise in our analysis
and we will look more carefully at $T_-$, the first passage
time to $-\sqrt{a} + \delta$ of $X$.

Our tool is again the Cameron-Martin-Girsanov formula with
a drift containing a different function $\varphi$. Let us
define $\varphi_2$ in the following way:
\begin{align}\label{def2phi}
\begin{array}{lccc}
\varphi_2: x \mapsto \left \{ \begin{array}{cl}
 \frac{(8/\beta-2) x -2 \sqrt{a}}{a - x^2} & \mbox{ if } x \in (-\sqrt{a}+ \delta/2,\sqrt{a}- \delta) \\
 0 & \mbox{ if } x \not\in (-\sqrt{a},\sqrt{a})
 \end{array} \right.
\end{array}
\end{align}
and extend it such that it remains a smooth function on
$\R$ satisfying: $\sup |\varphi| \leq \sqrt[4]{a}$ and
$\sup |\varphi'| \leq \sqrt{a}$ (this is possible for a
large enough ``$a$''). Similarly to the previous
subsection, the notations $Y$, $T'_-$, $\mathcal{C}'$ etc.
refers to definitions with this chosen function.

\begin{propo}\label{lem:fondforminside} We have
\begin{align*}
&\PP_{\sqrt{a}- \delta}\left(T_- < c_3\ln a/\sa,
\;\mathcal{C}\right) \\& \qquad = \exp\Big(-\frac{2}{3}
\beta a^{3/2} - \frac{3}{8} \beta \ln a + O\big(\sqrt{\ln
a}\big)\Big) \PP_{\sqrt{a}- \delta}\left(T'_- < c_3\ln
a/\sa, \;\mathcal{C}'\right)
\end{align*}
\end{propo}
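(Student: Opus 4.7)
The plan is to apply the Girsanov formula from Section~\ref{subsubsec:Girsanovformula} with $\varphi=\varphi_2$ and show that on the event $E':=\{T'_-<c_3\ln a/\sqrt a,\,\mathcal C'\}$ the Radon--Nikodym density $\exp(G_{T'_-}(Y))$ is sandwiched pointwise in $\exp\bigl(-\tfrac{2}{3}\beta a^{3/2}-\tfrac{3}{8}\beta\ln a\pm c\sqrt{\ln a}\bigr)$ for a constant $c=c(\beta)$. Since
\[
\PP_{\sqrt a-\delta}(T_-<c_3\ln a/\sqrt a,\,\mathcal C)=\EE_{\sqrt a-\delta}\bigl[1_{E'}\exp(G_{T'_-}(Y))\bigr]
\]
by Girsanov, pulling this almost-deterministic factor out of the expectation then yields the claimed equality.

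I would first decompose $\tfrac{4}{\beta}G_{T'_-}(Y)$ into the three pieces recorded in Section~\ref{subsubsec:Girsanovformula}: the polynomial expression~\eqref{eqstar2}, the boundary contribution $\phi_2(Y_0)-\phi_2(Y_{T'_-})$ (where $\phi_2$ is an antiderivative of $\varphi_2$), and the integral~\eqref{starstar}. The crucial algebraic point is that $\varphi_2$ has been engineered so that on the interior $(-\sqrt a+\delta/2,\sqrt a-\delta)$ one has $\varphi_2(x)(a-x^2)=(8/\beta-2)x-2\sqrt a$; this makes $\int\varphi_2(Y_t)(Y_t^2-a)\,dt$ in \eqref{starstar} exactly cancel the delicate terms $(8/\beta-2)\int Y_t\,dt-2\sqrt a\,T'_-$ of \eqref{eqstar2}, up to errors supported on the short initial interval $[0,L']$ where $Y$ may leave the interior. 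What remains of \eqref{eqstar2} is $-\tfrac{8}{3}a^{3/2}+4\sqrt a\,\delta^2+2\delta T'_--\tfrac{4}{3}\delta^3$, giving the main $-\tfrac{2}{3}\beta a^{3/2}$ together with an error of $\beta\sqrt{\ln a}+o(1)$ after multiplication by $\beta/4$.

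The boundary term is computed from the antiderivative
\[
\phi_2(x)=-\tfrac{1}{2}(8/\beta-2)\ln|a-x^2|-\ln\Bigl|\tfrac{\sqrt a+x}{\sqrt a-x}\Bigr|.
\]
The first summand is even in $x$ and cancels between $\pm(\sqrt a-\delta)$; the second yields $-2\ln\bigl(\tfrac{2\sqrt a-\delta}{\delta}\bigr)=-\tfrac{3}{2}\ln a+O(\ln\ln a)$, which multiplied by $\beta/4$ produces the $-\tfrac{3}{8}\beta\ln a$ contribution. The remaining integrals from \eqref{starstar}, namely $\tfrac{2}{\beta}\int\varphi_2'(Y_t)\,dt$, $\tfrac{1}{2}\int\varphi_2(Y_t)^2\,dt$, and $-\int t\,\varphi_2(Y_t)\,dt$, together with the leakage from the imperfect cancellation on $[0,L']$, must be shown to be $O(\sqrt{\ln a})$. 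The $[0,L']$ piece is controlled by $\mathcal C'$, which gives $L'\le 1/\sqrt a$ and $|Y_t|\le c_2\sqrt a$, combined with the uniform bounds $\sup|\varphi_2|\le a^{1/4}$ and $\sup|\varphi_2'|\le\sqrt a$.

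The main obstacle is controlling those three integrals over $[L',T'_-]$, where $Y\in(-\sqrt a+\delta,\sqrt a-\delta)$ but $\varphi_2$ spikes to size $\sim 1/\delta=O(a^{1/4}(\ln a)^{-1/4})$ near the turning points. I would use an occupation-time comparison: since the drift $v(y)\approx-(a-y^2)+\varphi_2(y)$ is strongly negative on the interior, writing $dt=-dY/|v(Y)|$ reduces the integrals to deterministic spatial ones such as $\int\varphi_2(y)^2/|v(y)|\,dy$, which near $y=\sqrt a-\delta$ evaluate to $O(1/(\sqrt a\,\delta^2))=O(1/\sqrt{\ln a})=o(1)$. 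Pathwise deviations of $Y$ from this deterministic comparison are handled by standard Brownian estimates over the time interval $T'_-\le c_3\ln a/\sqrt a$. Combining all pieces yields $G_{T'_-}(Y)=-\tfrac{2}{3}\beta a^{3/2}-\tfrac{3}{8}\beta\ln a+R$ with $|R|=O(\sqrt{\ln a})$ deterministically on $E'$, and both inequalities in the claimed equality follow by pulling out this factor.
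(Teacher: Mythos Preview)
Your overall architecture matches the paper's proof: Girsanov with $\varphi_2$, the algebraic cancellation $\varphi_2(y)(y^2-a)+(8/\beta-2)y-2\sqrt a=0$ on the interior, the antiderivative computation giving the $-\tfrac{3}{2}\ln a$ boundary term, and the handling of the $[0,L']$ excursion via $\mathcal C'$. Where you diverge is in bounding the residual integrals $\int\varphi_2'(Y_t)\,dt$, $\int\varphi_2(Y_t)^2\,dt$, and $\int t\,\varphi_2(Y_t)\,dt$ over $[L',T'_-]$.

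You propose an occupation-time comparison, writing $dt\approx -dY/|v(Y)|$ and reducing to spatial integrals, then invoking ``standard Brownian estimates'' to control the discrepancy between $Y$ and its deterministic comparison. This step is not justified as written: $Y$ is genuinely stochastic, the change of variables $dt=-dY/|v(Y)|$ is only heuristic, and the claim that the resulting error is absorbed by Brownian fluctuation estimates over a time window of length $c_3\ln a/\sqrt a$ needs a concrete argument you have not supplied. It is also unnecessary. The paper simply bounds the integrands pointwise on $[-\sqrt a+\delta,\sqrt a-\delta]$: one has $|\varphi_2(y)|\le c/\delta$ and $|\varphi_2'(y)|+\tfrac12\varphi_2(y)^2\le c'a/(a-y^2)^2\le 2c'/\delta^2$, so that
\[
\int_0^{T'_-}\Bigl(\tfrac{2}{\beta}|\varphi_2'(Y_t)|+\tfrac12\varphi_2(Y_t)^2\Bigr)\,dt\;\le\;\frac{2c'}{\delta^2}\,T'_-\;\le\;\frac{2c'}{\delta^2}\cdot\frac{c_3\ln a}{\sqrt a}\;=\;O(\sqrt{\ln a}),
\]
since $\delta^2=\sqrt{\ln a}/\sqrt a$. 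Similarly $|\int t\,\varphi_2(Y_t)\,dt|\le (c''/\delta)(T'_-)^2=o(1)$. These crude sup bounds already deliver the $O(\sqrt{\ln a})$ error you need, with no probabilistic work beyond the deterministic constraint $T'_-\le c_3\ln a/\sqrt a$ on the event $E'$. Replace your occupation-time paragraph with this, and the proof is complete and identical to the paper's.
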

\begin{proof}
\noindent Let us compute the new Radon-Nikodym derivative
according to the position of $Y$ using the relations
(\ref{star}-\ref{starstar}) and (\ref{eqstar}).
\medskip

\noindent At first, the term $\phi(Y_0) - \phi(Y_{T'_-})$
is equal to $-3/2 \ln a$. Moreover,
\begin{align*}
\forall y \in \left[-\sqrt{a} + \delta,\sqrt{a}-\delta
\right],\:\: -2 \sqrt{a} + \left(\frac{8}{\beta} - 2\right)
y + (y^2 - a) \varphi(y) =0.
\end{align*}

\noindent Consequently, there exists a constant $c' > 0$,
depending only on $\beta$, such that for every $y \in
[-\sqrt{a} + \delta,\sqrt{a}-\delta]$,
\begin{align*}
\left|-2 \sqrt{a} + \left(\frac{8}{\beta} - 2\right) y +
(y^2 - a) \varphi(y) + \frac{2}{\beta} \varphi'(y) +
\frac{1}{2}\varphi(y)^2\right| &\leq \frac{c' a}{(a-y^2)^2}
\leq \frac{2 c'}{\delta^2}.
\end{align*}
\noindent There is another constant $c^{\prime\prime} > 0$
such that
\begin{align*}
\left|\int_0^{T'_-} u \,\varphi(Y_u) du\right| \leq
\frac{c^{\prime\prime}}{\delta} T'^2_-.
\end{align*}

\noindent For every $y \geq \sqrt{a}-\delta$,
\begin{align*}
\left|\frac{2}{\beta} \varphi'(y) + \frac{1}{2}
\varphi(y)^2 + \varphi(y) (y^2 - a - t)\right| \leq
\left(\frac{2}{\beta} + 1\right) \sqrt{a}
\end{align*}

\noindent Putting all together and using the upper bound on
the last passage time to $\sqrt{a} - \delta$ contained in
$\mathcal{C}$, we obtain:
\begin{align*}
\left|\frac{4}{\beta} G_{T'_-}(Y) + \frac{8}{3} a^{3/2} +
\frac{3}{2} \ln a\right| \leq 4 \sqrt{a} \delta^2 + \frac{2
c'}{\delta^2} T'_- + \frac{c^{\prime\prime}}{\delta} T'^2_-
+ 2 \delta T'_- + \frac{4}{3} \delta^3 +
\left(\frac{2}{\beta} + 1\right).
\end{align*}
\noindent If $\{T'_- \leq c_3\ln a/\sa\}$ holds,
\begin{align*}
\frac{2 c'}{\delta^2} T'_- +
\frac{c^{\prime\prime}}{\delta} T'^2_- + 2 \delta T'_- +
\frac{4}{3} \delta^3 \leq 2 c' \sqrt{\ln a} + O(1).
\end{align*}

\medskip

\noindent Under $\{T'_- < c_3\ln a/\sa\}\cap \mathcal{C}'$,
we conclude
\[
G_{T'_-}(Y) = -\frac{2}{3} \beta a^{3/2} - \frac{3}{8}
\beta \ln a + O\left(\sqrt{\ln a}\right) \qedhere\]
\end{proof}

\noindent To complete the study inside the parabola for the
lower bound, we prove:
\begin{lemma}\label{completelower}
There exists $c_4 >0$ depending only on $\beta$ such that
with $\xi=c_3\ln a/\sa$
\begin{align*}
\PP_{\sqrt{a}-\delta}(T'_- <\xi,\;\mathcal{C}') \geq
\exp\left(-c_4 \sqrt{\ln a}\right).
\end{align*}
\end{lemma}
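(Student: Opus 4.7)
The plan is to find a Brownian event $A$ of probability at least $\exp(-c_{4}\sqrt{\ln a})$ on which the diffusion $Y$ stays close to the deterministic noiseless trajectory, completes the crossing from $\sqrt{a}-\delta$ to $-\sqrt{a}+\delta$ within time $\xi$, and satisfies $\mathcal{C}'$. First, I would analyze the ODE $\dot{y}=-a+y^{2}-t+\varphi_{2}(y)$ with $y(0)=\sqrt{a}-\delta$. At the starting point the drift equals $-2\sqrt{a}\delta+O(1/\delta)+O(\xi)$, which is strictly negative because $\sqrt{a}\delta^{2}=\sqrt{\ln a}\to\infty$; in the bulk $|y|\le\sqrt{a}/2$ it is bounded above by $-3a/4$. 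Separating variables on the dominant Riccati part $\dot{y}=y^{2}-a$ yields the explicit trajectory and shows it reaches $-\sqrt{a}+\delta$ at time $t^{*}=(3/(4\sqrt{a}))\ln a+o(\ln a/\sqrt{a})$, strictly less than $\xi$ for $c_{3}$ large enough.

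Second, set $A:=\{\sup_{t\in[0,\xi]}|B_{t}|\le c_{5}\delta\}$ for a small constant $c_{5}$ to be tuned. Brownian scaling $\sup_{t\le\xi}|B_{t}|\stackrel{d}{=}\sqrt{\xi}\sup_{s\le1}|B_{s}|$, the lower tail \eqref{TailBMlower}, and the identity $\xi/\delta^{2}=c_{3}\sqrt{\ln a}$ give $P(A)\ge\exp(-(c_{bm}c_{3}/c_{5}^{2})\sqrt{\ln a})$, of the required form.

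Third, on $A$, write $Z_{t}:=Y_{t}-(2/\sqrt{\beta})B_{t}$, so $\dot{Z}_{t}=F(Z_{t}+(2/\sqrt{\beta})B_{t},t)$ where $F(y,t):=-a+y^{2}-t+\varphi_{2}(y)$ and $|(2/\sqrt{\beta})B_{t}|\le\eta:=2c_{5}\delta/\sqrt{\beta}$. Define sandwiching trajectories $y^{\pm}(t)$ as solutions of $\dot{y}^{\pm}=\sup/\inf_{|\theta|\le\eta}F(y^{\pm}+\theta,t)$ with $y^{\pm}(0)=\sqrt{a}-\delta$; the standard comparison theorem for ODEs yields $y^{-}(t)\le Z(t)\le y^{+}(t)$. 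Rerunning the ODE analysis above on these perturbed equations, noting that the perturbation contributes at most an extra $O(\sqrt{a}\eta)=O(c_{5}\sqrt{a}\delta)$ to the drift, which is only a fraction $c_{5}$ of the leading decay rate $-2\sqrt{a}\delta$, shows that both $y^{\pm}$ still complete the crossing within time $\xi$ once $c_{5}$ is small enough. Therefore $T'_{-}<\xi$ on $A$. The constraint $\mathcal{C}'$ also follows: $\sup_{[0,1/\sqrt{a}]}Y_{t}\le y^{+}(0)+\eta=\sqrt{a}-\delta+\eta\ll c_{2}\sqrt{a}$, and since $y^{+}(1/\sqrt{a})\le\sqrt{a}-2\delta$ while $\eta=o(\delta)$, we have $Y(t)\le y^{+}(t)+\eta<\sqrt{a}-\delta$ for every $t\ge 1/\sqrt{a}$, giving $L'<1/\sqrt{a}$.

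The main obstacle is making the sandwich quantitative through the nonlinear dynamics. A naive linearization and Gronwall bound using the Lipschitz constant $2\sqrt{a}$ of $y^{2}-a$ would amplify the noise by $\exp(2\sqrt{a}\xi)=a^{O(c_{3})}$, swamping the target exponent $O(\sqrt{\ln a})$. Working directly with the nonlinear perturbed ODEs $y^{\pm}$ avoids this: one only needs to check that a small bounded perturbation does not spoil the crossing, which reduces to the robustness of the explicit Riccati trajectory under perturbations that shift its attractor location by at most $\eta\ll\delta$.
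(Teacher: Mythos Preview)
Your approach is essentially the paper's: both restrict to the Brownian event $\{\sup_{[0,\xi]}|B_t|\le c\,\delta\}$ (whose probability is $\exp(-c\sqrt{\ln a})$ by \eqref{TailBMlower}), subtract the noise, and use ODE comparison with the Riccati equation $\dot y=y^2-a$ to force the crossing within time $\xi$; the paper carries this out via the explicit solution $H(t)=-\sqrt{C}\tanh(\sqrt{C}\,t-\operatorname{arctanh} b)$ and replaces $\mathcal{C}'$ by the simpler event $\{T'_{\sqrt a-\delta/2}=\infty\}$, while you use an abstract sandwich $y^{-}\le Z\le y^{+}$ and verify $\mathcal{C}'$ directly, but these are minor implementation differences. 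One small slip: $\eta=2c_5\delta/\sqrt{\beta}$ is a fixed constant multiple of $\delta$, not $o(\delta)$ as you write; since $c_5$ is at your disposal this does not affect the argument (just take $c_5$ small enough that, e.g., $\eta<\delta/3$).
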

\begin{proof}For the lower bound we can replace the event
$C'$ by the event that $T'_+=T'_{\sqrt a-\delta/2}$ is
infinite, i.e.\ the corresponding level is never hit. We
will show that this events happens as long as
$$M := \sup\left\{|B_t|,\; t \in \left[0,
\xi\right]\right\} \le \delta\sqrt\beta/5,$$ which by the
Brownian motion estimate (\ref{TailBMlower}) has the right
probability.

Let $\xi= c_3\ln a/\sa$. Again, we compare our equation to
an ODE. The quantity $Z=X-B$ on $[0,\xi]$ satisfies
$$
Z'=-(Z-B)^2+t-a\le Z^2+a+ \mbox{$\frac{4}{\sqrt{\beta}}$}MZ
+\xi,
$$
so let $H$ be the solution of the (random) ODE:
\begin{align}
\left\{ \begin{array}{l}
         H'(t) = H^2(t) - C, \label{ODEH} \\
H(0) = \sqrt{a}-\delta,
        \end{array} \right.
\end{align}
where the random constant satisfies
$$C= a - \frac{4}{\sqrt{\beta}} \sqrt{a} M +O(1).$$

\noindent By the same argument of comparison as in the
Section $\ref{aboveparabola}$, when $M\le c\delta$ the
diffusion $Y$ is under $t \mapsto H(t) + 2/\sqrt{\beta}
B_t$ up to the minimum of $\xi$ and the exit time from
$[-\sa-1,\sa]$. Therefore we will have $T'_- < \xi, T'_- <
T'_+$ as long as
\begin{align*}
H(\xi) + \frac{2}{\sqrt{\beta}}B_\xi\leq -\sqrt{a} +
\delta,\quad \mbox{and} \qquad\sup_{s\in [0,\xi]}
\left(H(s) + \frac{2}{\sqrt{\beta}} B_s\right) < \sqrt{a}
-\delta/2.
\end{align*}
Since $H(s)$ is decreasing in $s$, the second event is
implied by our assumption on $M$.

The solution $H$ takes the form:
\begin{align*}
H(t) = - \sqrt{C} \tanh\left(\sqrt{C} t -
\operatorname{arctanh}\left(b\right)\right) =
\frac{\sqrt{C} \left(\tanh \left(\sqrt{C}
t\right)-b\right)}{b \tanh \left(\sqrt{C} t\right)-1},
\qquad b=\frac{\sqrt{a}-\delta}{\sqrt{C}}.
\end{align*}
When $c_3\ge 1$ we have $\tanh (\sqrt{C} t)=1+O(a^{-2})$. So
we get the asymptotics
$$
H(\xi)=-\sqrt{a}+ 2M/\sqrt{\beta}+o(\delta),
$$
and we indeed have
\[
H(\xi) + \frac{2}{\sqrt{\beta}} B_\xi \le
-\sqrt{a}+\frac{4M}{\sqrt{\beta}} \le \frac45\delta
+o(\delta). \qedhere
\]
\end{proof}

\section{Under the parabola, lower bound}\label{underparabola}

We will prove:
\begin{propo}\label{propounderparabola}
There exists $c_5 > 0$ depending only on $\beta$ such that,
\begin{align*}
\PP_{-\sqrt{a} + \delta} (T_{-\infty}< \infty) \geq
\exp\left(-c_5 \sqrt{\ln a}\right).
\end{align*}
\end{propo}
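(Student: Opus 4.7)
The plan is to mimic the lower bound of Proposition \ref{resuabove}: split the time axis into a short ``crossing'' window $[0,t_1]$ with $t_1:=1/\sqrt a$ and a ``descent'' window $[t_1,t_1+T]$ with $T:=2\ln a/\sqrt a$, and apply the strong Markov property at $t_1$. The initial condition $-\sqrt a+\delta$ lies just above the lower branch of the critical parabola, where the drift $t+a-x^2\approx 2\sqrt a\,\delta$ is positive and pushes $X$ upward, so Phase~1 pays a Gaussian price to cross below the parabola and Phase~2 exploits the then strongly negative drift to drive $X$ to $-\infty$.

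For Phase~1, while $X_s$ stays within $O(\delta)$ of $-\sqrt a$ the drift is bounded by $O(\sqrt a\,\delta)$, which integrates over $[0,t_1]$ to at most $O(\delta)$. A standard SDE comparison with Brownian motion plus a bounded drift then reduces the problem to the Gaussian lower bound $P(B_{t_1}\le-C'\delta)\ge\exp(-O(\sqrt{\ln a}))$, where we use $\delta^2/t_1=\sqrt{\ln a}$. Choosing $C=C(\beta)$ large enough for Phase~2 one obtains
\[
 \PP_{-\sqrt a+\delta}\bigl(X_{t_1}\le -\sqrt a-C\delta\bigr)\ge\exp(-c_1\sqrt{\ln a}).
\]

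For Phase~2, by the strong Markov property at $t_1$ we restart at some $x_1\le-\sqrt a-C\delta$. Let $\tilde B_s:=B_{t_1+s}-B_{t_1}$ and restrict to the small-ball event $\mathcal E:=\{\sup_{s\le T}|\tilde B_s|\le C\delta\sqrt\beta/8\}$, whose probability by \eqref{TailBMlower} (after Brownian scaling) is at least $\exp(-c_{bm}T/(C\delta/8)^2)=\exp(-O(\sqrt{\ln a}))$ since $T/\delta^2=2\sqrt{\ln a}$. On $\mathcal E$, define $W(s):=-\bigl(X_{t_1+s}-\tfrac{2}{\sqrt\beta}\tilde B_s\bigr)-\sqrt{t_1+s+a}$. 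A direct computation gives $W(0)\ge C\delta/2$ and, using $X_{t_1+s}^2-(t_1+s+a)\ge 2\sqrt{t_1+s+a}\bigl(W-\tfrac{2}{\sqrt\beta}\tilde B_s\bigr)$ in the regime $W\ge C\delta/2$, one finds $W'(s)\ge \tfrac12\sqrt a\,W(s)$ throughout. Therefore $W$ grows exponentially at rate $\sqrt a/2$, reaches $\sqrt a$ within time $O(\ln a/\sqrt a)\le T$, after which $X\le-2\sqrt a+O(\delta)$ and the drift is of order $-a$, producing blow-up in an additional $O(1/\sqrt a)$. Combining the two phases by the strong Markov property at $t_1$ and monotonicity of $X$ in the initial condition gives the claim with $c_5=c_1+c_3$.

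The main technical obstacle is the Phase~2 comparison: the Brownian perturbation has the same $\delta$-scale as the initial margin $W(0)$ below the parabola, so one must arrange that the perturbation cannot erase this margin. This is why $C$ is taken large enough in Phase~1 and the small-ball radius is set to a fixed fraction of $\delta$ strictly less than $W(0)/2$; once this ordering is fixed, the margin is preserved, the growth rate $\sqrt a$ dwarfs the $O(1/\sqrt a)$ correction coming from the moving parabola, and the ODE comparison runs cleanly through.
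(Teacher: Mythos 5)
Your proposal is correct and follows essentially the same route as the paper: pay a Gaussian cost of order $\exp(-O(\sqrt{\ln a}))$ to cross a $\delta$-neighbourhood of the lower branch of the parabola in time $1/\sqrt a$, then restrict to a Brownian small-ball event of the same cost and run an ODE comparison below the parabola to force blow-up. The only difference is organizational: the paper uses three sub-steps (crossing, a further descent to depth $\sqrt{\ln a}/\sqrt[4]{a}$, then an explicit $\tanh$-type solution $G$), whereas you merge the last two into the single differential inequality $W'\ge\tfrac12\sqrt a\,W$, which works because your exponential-growth phase fits inside the window $T=2\ln a/\sqrt a$ (it takes time $\sim\tfrac{3}{2}\ln a/\sqrt a$).
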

\begin{proof}
Using the strong Markov property and the increasing
property, we can lower bound the left hand side by
\begin{align*}
\PP_{-\sqrt{a} + \delta}\left(T_{-\sqrt{a}-\delta} <
\frac{1}{\sqrt{a}}\wedge T_{-\sqrt{a}+2\delta}\right)
\times \PP_{(\frac{1}{\sqrt{a}}, -\sqrt{a} -
\eps)}\left(T_{-\sqrt{a}-\frac{\sqrt{\ln a}}{\sqrt[4]{a}}}
< \frac{\ln a}{2 \sqrt{a}} \wedge T_{-\sqrt{a}}\right)
\\ \times \PP_{\left(\frac{\ln a}{\sqrt{a}}, -\sqrt{a} -
\frac{\sqrt{\ln a}}{\sqrt[4]{a}}\right)}\left(T_{-\infty} <
\infty \right).
\end{align*}

\noindent $\bullet$ The first probability gives the main
cost. Under this event, the process $X$ is stochastically
dominated by the drifted Brownian motion: $$t \mapsto -
\sqrt{a} + \delta + 2 \sqrt{a} \,\delta t +
\frac{2}{\sqrt{\beta}} B_t.$$ Thus,
\[
\PP_{-\sqrt{a} + \eps}\bigg(T_{-\sqrt{a}-\eps} <
\frac{1}{\sqrt{a}}\wedge T_{-\sqrt{a}+2\eps}\bigg) \notag
 \geq \PP\left(B_1 < -\frac{3}{2}\sqrt{\beta} \sqrt[4]{a}
\,\eps, \; \sup_{s\in [0,1]} B_s \leq \frac{1}{2}
\sqrt{\beta} \sqrt[4]{a} \eps\right). \notag
\]
By the reflection principle, this equals
\[ \PP\left(\frac{5}{2} \sqrt{\beta}  \sqrt[4]{a} \eps \geq B_1 \geq \frac{3 }{2} \sqrt{\beta} \sqrt[4]{a} \, \eps\right)
\geq \sqrt{\beta} \sqrt[4]{\ln a} \,
\exp\left(-\frac{25}{4}\beta \sqrt{\ln a}\right). \notag
\]

\medskip

\noindent $\bullet$ For the second part, under the studied
event, the diffusion $(X_t,\;t \geq 1/\sqrt{a})$ is
stochastically dominated by
$$\frac{1}{4 \sqrt{a}} + t + \frac{2}{\sqrt{\beta}}B_t -\sqrt{a} - \eps.$$
Thus the studied probability is bounded from below by a
constant depending only on $\beta$.

\medskip

\noindent $\bullet$ For the last part, we need to compare
the diffusion with the solution of a simple differential
equation. Similarly to the previous comparisons, under the
event $\{X(\ln a/\sqrt{a} = - \sqrt{a}-\sqrt{\ln
a}/\sqrt[4]{a},\; T_{-\infty} < T_{-\sqrt{a}}\wedge (3/8
\ln a/\sqrt{a}\}$, the diffusion $X$ is stochastically
dominated by $G(t)+2/\sqrt{\beta} B_t$ where $G$ is the
solution of the differential equation:
\begin{align*}
\left \{
\begin{array}{l}
G'(t) = a + \frac{11}{8} \frac{\ln a}{\sqrt{a}} - \left(1- \frac{4}{\sqrt{\beta}} \frac{M}{\sqrt{a}}\right)G^2(t) \\
G(0) = -\sqrt{a}-\frac{\sqrt{\ln a}}{\sqrt[4]{a}}.
\end{array} \right.
\end{align*}
and $$M = \sup_{s \in [0,\frac{3}{8}\frac{\ln
a}{\sqrt{a}}]} |B_s|.$$ Whenever we have $$\left\{M \leq
\frac{2}{2\,\sqrt{\beta}}\frac{\sqrt{\ln
a}}{\sqrt[4]{a}}\right\},$$ the function $G$ blows up to
$-\infty$ at a time smaller than $3/8 \ln a/\sqrt{a}$ and
the diffusion $(X_t,\; t\in [0, 3/8 \ln a/\sqrt{a}])$ stays
under $-\sqrt{a}$. Therefore:
\begin{align*}
\PP_{\left(\frac{\ln a}{\sqrt{a}}, -\sqrt{a} -
\frac{\sqrt{\ln a}}{\sqrt[4]{a}}\right)}\left(T_{-\infty} <
\infty \right) \geq P\left(M \leq
\frac{2}{2\,\sqrt{\beta}}\frac{\sqrt{\ln
a}}{\sqrt[4]{a}}\right)
\end{align*}
which is greater than a constant depending only on $\beta$.
It leads to the result.
\end{proof}

\noindent \textbf{Acknowledgement:} The authors would like to thank the BME of Budapest for its kind hospitality 
while we finished this article. L.D. is also grateful to the 
mathematics department of the University of Toronto for its welcome during her visits. 
L.D. acknowledges support from the Balaton/PHC grant \#19482NA and 
B. V. Canada Research Chair program and the NSERC DAS program.

\bibliographystyle{dcu}
\bibliography{biblioTW}
\end{document}